\documentclass{amsart}
\usepackage{amsmath}
\usepackage{amsthm}
\usepackage[applemac]{inputenc} 
\usepackage{hyperref}
\usepackage{nicematrix}
\usepackage{amssymb}
\usepackage{graphicx}
\usepackage{enumitem}
\usepackage{amsfonts}
\usepackage[all]{xy}
\usepackage{array}
\usepackage{graphicx}
\usepackage{xcolor}
\usepackage{ytableau}

\usepackage[textsize=tiny]{todonotes}

\usepackage{enumitem}

\newcommand\lra{\longrightarrow}

\newcommand{\ol}{\overline}

\newcommand{\C}{\mathbb{C}}

\newcommand{\G}{\mathbb{G}}

\renewcommand{\P}{\mathbb{P}}
\newcommand{\PP}{\mathbb{P}}

\newcommand{\R}{\mathbb{R}}

\newcommand{\Z}{\mathbb{Z}}

\newcommand{\cA}{\mathcal A}

\newcommand{\cO}{\mathcal O}
\newcommand{\cP}{\mathcal P}

\newcommand{\cU}{\mathcal U}

\newcommand{\cY}{\mathcal Y}

\newcommand{\DA}{{\rm A}}
\newcommand{\DB}{{\rm B}}

\newcommand{\DD}{{\rm D}}

\newcommand{\SL}{\operatorname{SL}}
\newcommand{\SO}{\operatorname{SO}}

\let\div\relax
\DeclareMathOperator{\div}{div}

\newcommand{\CU}{\mathcal{C}\mkern-1mu U}

\newcommand{\CX}{\mathcal{C}\mkern-1mu X}

\newcommand{\GU}{\mathcal{G} U}
\newcommand{\GX}{\mathcal{G}\mkern-1mu X}

\newcommand{\LX}{\mathcal{L}\!X}

\renewcommand{\ss}{\mathrm{ss}}

\DeclareMathOperator{\Bir}{Bir}

\DeclareMathOperator{\Chow}{Chow}

\DeclareMathOperator{\Conv}{Conv}

\DeclareMathOperator{\HH}{H}

\DeclareMathOperator{\Proj}{Proj}

\DeclareMathOperator{\Spec}{Spec}

\DeclareMathOperator{\Mo}{M}

\DeclareMathOperator{\No}{N}

\newcolumntype{C}{>{$}c<{$}}

\newcolumntype{C}{>{$}c<{$}}

\newtheorem{theorem}{Theorem}[section]

\newtheorem{lemma}[theorem]{Lemma}
\newtheorem{proposition}[theorem]{Proposition}
\newtheorem{corollary}[theorem]{Corollary}

\theoremstyle{definition}
\newtheorem{definition}[theorem]{Definition}

\newtheorem{remark}[theorem]{Remark}
\newtheorem{construction}[theorem]{Construction}

\title{A two-step approach to Chow quotients}
\author[Occhetta]{Gianluca Occhetta}
\address{Dipartimento di Matematica, Universit\`a degli Studi di Trento, via
Sommarive 14 I-38123, Trento (TN), Italy}
\email{gianluca.occhetta@unitn.it, eduardo.solaconde@unitn.it}

\author[Sol\'a Conde]{Luis E. Sol\'a Conde}
\subjclass[2020]{Primary 14L30, 14E30; Secondary 14L24, 14M17}
\thanks{Authors partially supported by INdAM--GNSAGA}

\begin{document}

\begin{abstract} 
The Chow quotient of a projective variety by the action of a complex torus is known to have a very complicated geometry, even in the case of simple varieties, such as rational homogeneous varieties. In this paper we propose an approach in which the geometry of the Chow quotient is encoded in a projective toric variety and a finite subgroup  of its birational automorphisms. We then illustrate how to apply our strategy in the case of some particular rational homogeneous varieties.
\end{abstract}
\maketitle

\section{Introduction}\label{sec:intro}

The construction and study of quotients by algebraic group actions is a central theme in algebraic geometry, with deep connections to birational geometry and the theory of moduli spaces. Among the various notions of quotient, Chow quotients, introduced by Kapranov~\cite{Kapranov}, occupy a distinguished position: while they are often difficult to compute explicitly, they tend to encode rich geometric information and admit meaningful modular interpretations in special cases. Classical examples include their relationship with Grothendieck--Knudsen moduli spaces~\cite{Kn}, as well as with spaces of complete collineations and complete quadrics (see, e.g.,~\cite{DeConciniProcesi}).

In this paper, we investigate the problem of constructing Chow quotients of smooth projective varieties under the action of algebraic tori. Our approach is motivated by the observation that, despite their global nature, Chow quotients often admit a description in terms of simpler, more combinatorial objects arising from torus-invariant open subsets.
 
This perspective was explored in our previous work~\cite{BOS2}, where we studied the action of a maximal torus on the complete flag variety of $\P^3$. In that setting, we showed that, in favorable situations, the computation of the Chow quotient can be naturally divided into two steps. First, one computes certain combinatorial quotients associated with torus-invariant open subsets. Second, one analyzes the birational relations among these quotients in order to reconstruct the global geometry. In the case of rational homogeneous varieties endowed with the action of a maximal torus in their automorphism group, these birational relations are governed by the action of the corresponding Weyl group.

The main goal of the present paper is to identify general conditions under which this strategy applies. More precisely, we establish a set of hypotheses ensuring that the Chow quotient can be reconstructed as an inverse limit of combinatorial quotients associated with torus-invariant open subsets. This provides a conceptual and effective framework for computing Chow quotients in a relevant class of examples, namely those in which fixed points are isolated and their weights with respect to a given linearization on an ample line bundle are extremal; following \cite{BB}, we say that the action is fully definite on every fixed point:

\begin{theorem}\label{thm:main}
Let $X$  be a  smooth projective variety, endowed with the action of an algebraic torus $H$, fully definite on every fixed point. Then, for every fixed point $x\in X$ there exists an $H$-equivariant isomorphism of $T_{X,x}$ with an open affine neighborhood $U_x$ of $x$ in $X$, whose combinatorial quotients $\CU_x$ are projective. Moreover there exists a natural morphism from the Chow quotient of $X$ to $\prod_{x\in X^H}\CU_x$, which is an isomorphism to the normalization of its image.
\end{theorem}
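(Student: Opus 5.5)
The argument splits into three parts: local linearization around each fixed point, projectivity of the local combinatorial quotients, and the comparison of the Chow quotient with the product of these quotients.

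\emph{Step 1: local linearization.} Fix $x\in X^H$. Full definiteness means that, for the linearization on an ample line bundle $L$, the weight $\mu_L(x)$ is a vertex of the moment polytope. Moreover, all weights of $H$ on $T_{X,x}$ lie in an open half-space, after translating by $\mu_L(x)$. Choose a one-parameter subgroup $\lambda\colon\C^*\to H$ that is strictly positive on all these weights. Then $x$ is the source (sink) of the $\C^*$-action given by $\lambda$, and the Bia{\l}ynicki-Birula cell $X^+(x)$ is open, hence an affine open neighborhood $U_x$ of $x$. By Bia{\l}ynicki-Birula, this cell is $H$-equivariantly isomorphic to the positive part of $T_{X,x}$, which here is all of $T_{X,x}$. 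A more hands-on alternative: take $U_x=\{s\neq 0\}$ with $s$ a section of $L^m$ nonvanishing at $x$ and of weight $m\mu_L(x)$. Then choose $H$-eigenfunctions generating $\mathfrak m_x/\mathfrak m_x^2$. The positivity of the weights, via a graded Nakayama argument, gives $\cO(U_x)\cong \operatorname{Sym}T^\vee_{X,x}$.

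\emph{Step 2: projectivity of $\CU_x$.} Now $U_x\cong \mathbb A^n$ carries a linear $H$-action whose weights $w_1,\dots,w_n$ lie in an open half-space. The combinatorial quotient is the toric variety whose fan is the GKZ/secondary-type fan, i.e.\ the normal fan of the fiber polytope of $\Conv(0,e_1,\dots,e_n)\to\Conv(0,w_1,\dots,w_n)$. It is equivalently the inverse limit of the GIT quotients $\mathbb A^n/\!\!/_\chi H$. Each of these is projective over $\mathbb A^n/\!\!/_0 H=\Spec \C[\mathbb A^n]^H$. Because the weights lie in a strict half-space, this affine quotient is a point. So all the GIT quotients are projective, and so is their inverse limit (a closed subscheme of their product, or the toric variety of a complete polytope).

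\emph{Step 3: the morphism and its injectivity.} Let $X^\circ\subset\bigcap_x U_x$ be the open set of points with generic orbit closures. Each $U_x$ is $H$-stable and dense, so the generic orbit closures in $X$ and in $U_x$ agree on $X^\circ$. This gives a birational rational map $\CX\dashrightarrow\CU_x$. The key point is to extend it to a morphism. An $H$-cycle $Z=\sum Z_i$ in the Chow quotient is a limit of generic orbit closures. Intersecting with $U_x$ gives the limit cycle in $U_x$, that is, $Z\cap U_x$. I would show this via the description of both quotients as limits of GIT quotients: the fan of $\CX$ refines the pulled-back fans, and the Chow cycle restricts to an $H$-invariant cycle of $U_x$ in the corresponding toric Chow/Hilbert family. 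The resulting morphism $\CX\to\prod_x\CU_x$ is then birational onto its image, since the image is irreducible of the same dimension. It is finite, because $Z$ is determined by its restrictions: every component $Z_i$ of $Z$ contains a fixed point in its closure, and by the choice of $\lambda$ it meets some $U_x$ (take the source of $Z_i$). So the morphism is quasi-finite and proper, hence finite. Since $\CX$ is normal, by definition as the normalized closure in the Chow variety, it is the normalization of the image.

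\textbf{Main obstacle.} The hardest step is showing that restriction of cycles $Z\mapsto Z\cap U_x$ is a morphism from the Chow quotient to $\CU_x$. Concretely, the issue is that multiplicities and flatness of families behave well under the open restriction and match the combinatorial quotient. To handle this, I would work on the universal family over $\CX$, restrict it to $U_x$, and use that $\CU_x$ is the normalization of the Chow (equivalently toric Hilbert) quotient of $\mathbb A^n$.
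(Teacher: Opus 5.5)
Your Steps 1 and 2 are sound. Getting projectivity from $\Spec \C[U_x]^H$ being a point is a tidy equivalent of the paper's boundedness of $\pi^{-1}(v)\cap P$. Your finiteness argument in Step 3 is essentially the paper's Proposition~\ref{prop:maptocomb2}: each component of a limit cycle meets some $U_x=\{y : x\in\ol{Hy}\}$, and these sets cover $X$.

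The genuine gap is the step you flag yourself: you never construct the morphism $\CX\to\CU_x$. Without it there is no map to $\prod_x\CU_x$ at all, and your sketch does not close the gap:
\begin{itemize}
\item $\CX$ is not toric, so ``the fan of $\CX$ refines the pulled-back fans'' has no meaning.
\item Restricting the universal family to $U_x$ gives a family of $H$-invariant cycles on a non-proper variety. To turn it into a morphism to $\CU_x$ you would need a parameter space for such cycles, a finite map from $\CU_x$ to it, and compatibility of specialization with restriction. None of this is set up.
\item The toric Hilbert scheme is not an equivalent substitute. It parametrizes ideals rather than cycles, and its main component is governed by the Gr\"obner fan of the weight configuration, which in general strictly refines the (secondary) fan of $\CU_x$.
\end{itemize}

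The paper never restricts cycles; it works on the GIT side. The missing idea is Lemma~\ref{lem:ampleb}: $X\setminus U_x$ supports an ample divisor $D$. To build it, take the limit of a very ample divisor not through $x$ under a general one-parameter subgroup with sink $x$; this limit is $H$-invariant and disjoint from $U_x$. Then add the components of $X\setminus U_x$.

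Since $\HH^0(U_x,\cO_{U_x})=\bigcup_r\HH^0(X,rD)$, for a linearization $v$ of $\cO_{U_x}$ the finitely many monomials $\chi^m$, $m\in\pi^{-1}(v)\cap P$, lie in $\HH^0(X,(r-1)D)$ for $r\gg0$. Hence $v$ extends to a linearization of $\cO_X(rD)$ whose invariant sections all vanish on $D$, and the semistable loci in $U_x$ and in $X$ coincide. So every GIT quotient of $U_x$ is a GIT quotient of $X$, compatibly with the maps among them. Combining this with $\CX\simeq\LX$ (Proposition~\ref{prop:BHR}), the universal property of the limit quotient, and the identification of $\CU_x$ with the limit quotient of $U_x$ (Corollary~\ref{cor:projective}) gives the contraction $\CX\to\CU_x$ of Proposition~\ref{prop:maptocomb}.

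Two smaller corrections:
\begin{itemize}
\item Birationality onto the image does not follow from equality of dimensions. It does follow from each $\CX\dashrightarrow\CU_x$ being birational.
\item The fiber polytope of $\Conv(0,e_1,\dots,e_n)\to\Conv(0,w_1,\dots,w_n)$ computes the Chow quotient of $\P^n$, not $\CU_x$. For $\C^*$ acting with weights $1,2,3$, its fan acquires the ray spanned by $\bar e_2+2\bar e_3$, which is absent from the fan of $\CU_x\simeq\P(1,2,3)$; here $\bar e_i$ are the images of the coordinate vectors in $\No(T/H)_\R$. Your inverse-limit description is the correct one.
\end{itemize}
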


The hypotheses of the Theorem are satisfied in the case of rational homogeneous varieties endowed with the action of a maximal torus, but also in the case of some spherical varieties (with the action of appropriate tori). As an application of this result, we consider the case of $\P(T_{\P^n})$, quadrics and Grassmannians. . 
In particular, we recover the Chow quotients of Grassmannians of lines --originally computed by Kapranov~\cite{Kapranov}-- from our perspective, obtaining a streamlined and conceptually transparent derivation. 

\noindent{\bf Outline: }
The paper is organized as follows. In Section \ref{sec:prelim} we recall the necessary background on Chow quotients and torus actions. Section \ref{sec:main} contains the proof of our main statement relating Chow and combinatorial quotients of torus invariant neighborhoods of points in which the action is fully definite. Finally, in Sections \ref{sec:picone}   and \ref{sec:grass} we present the explicit computation of the Chow quotient in the cases of Grassmannians and quadrics.

\section{Preliminaries}\label{sec:prelim}

Throughout the paper we will work over the field of complex numbers. For a free abelian group $M$, we denote by $M_{\R}$ the associated real vector space. When dealing with rational homogeneous varieties, that is, projective quotients of semisimple algebraic groups, we will freely use the marked Dynkin diagram notation, as presented, for instance, in \cite[Section~3.1]{WORS5}.

\subsection{Torus actions and quotients}\label{ssec:prelimtorus}

We recall basic facts on algebraic torus actions; see \cite{BWW,WORS2} for details. Let $H$ be an algebraic torus of rank $r$ acting nontrivially on a smooth projective variety $X$, and denote by $\Mo(H)$ its character lattice. The fixed locus decomposes as $X^H=\bigsqcup_{Y\in \cY} Y$, where each $Y$ is smooth and irreducible.

\subsubsection{Linearizations and weight polytopes}
Let $L$ be an ample line bundle on $X$. A {\em linearization of} $L$ is an $H$-action on $L$ compatible with the projection $L\to X$. Such linearizations always exist and differ by a character of $H$. For $Y\subset X^H$, the weight of the $H$-action on $L$ is constant along $Y$; we denote it by $\mu_L(Y)\in \Mo(H)$.

The {\em weight polytope of $(X,L)$} is the convex hull
\[
\Delta:=\Conv\{\mu_L(Y)\mid Y\in\cY\}\subset \Mo(H)_{\R}.
\]

The induced action on sections yields a decomposition
\[
H^0(X,L)=\bigoplus_{\tau\in \widetilde{\Gamma}} H^0(X,L)_\tau,
\]
where $\widetilde{\Gamma}$ is the set of occurring weights. The {\em polytope of sections} $\Gamma$ is the convex hull of $\widetilde{\Gamma}$. If $L$ is base point free, then $\Gamma=\Delta$ (cf. \cite[Lemma~2.4]{BWW}).

\subsubsection{GIT quotients}
For $u\in \Gamma$, define the graded algebra
\[
\cA_u := \bigoplus_{\substack{m\ge 0 \\ mu\in \Z^r}} H^0(X,mL)_{mu},
\]
which is finitely generated. The associated projective variety
\[
\GX_u := \Proj \cA_u
\]
is the {\em GIT quotient of $(X,L)$ corresponding to the linearization determined by $u$}.

\subsubsection{Chow quotients}

Let $H$ act on $X$. There exists an $H$-invariant open subset $U\subset X$ admitting a geometric quotient $\GU$, whose points correspond to closures of orbits with fixed dimension and homology class. This induces an embedding $\GU \hookrightarrow \Chow(X)$.

\begin{definition}\cite{Kapranov}\label{def:ChowQuotient}
The {\em Chow quotient} of $X$ by $H$ is the closure of this image in $\Chow(X)$. Its normalization is denoted by $\CX$. The universal family induces morphisms $p: \cU\to \CX$ and $q: \cU\to X$.
\end{definition}

A key property (cf. \cite[Theorem~0.4.3]{Kapranov}) is that $\CX$ dominates all GIT quotients:

\begin{theorem}\label{thm:invlim}
For every $v\in \Gamma$ there exists a birational morphism $\pi_v : \CX \to \GX_v$.
\end{theorem}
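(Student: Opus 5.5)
The plan is the standard Kapranov argument for $\Chow(X)$ to GIT quotients. Fix $v\in\Gamma$ and the semistable locus $X^{\ss}(v)\subset X$, which is $H$-invariant and open, with good quotient $X^{\ss}(v)\to\GX_v=\Proj\cA_v$. We may take the open set $U$ of Definition~\ref{def:ChowQuotient} inside $X^{\ss}(v)$: after shrinking, the generic orbits are stable, or at least semistable, for every $v$, since there are finitely many GIT chambers. The composition $\GU\to\GX_v$ is then a birational map $\CX\dashrightarrow\GX_v$. The task is to show that it extends to a morphism on the normalization.

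The key step is to assign to every point $z\in\CX$ a point of $\GX_v$. Such a $z$ corresponds to an $H$-invariant cycle $Z_z=\sum a_i\overline{O_i}$, a limit of generic orbit closures. First, I will show that for $v$ in the polytope some component $\overline{O_i}$ meets $X^{\ss}(v)$. The moment polytope of a generic orbit closure is all of $\Delta=\Gamma$ (the base-point-free case, $L$ replaced by a multiple). Moment polytopes are additive in flat limits of cycles, so the polytopes of the components $\overline{O_i}$ of maximal dimension subdivide $\Delta$. The point $v$ therefore lies in the polytope of some component. By the Hilbert--Mumford criterion, a point of an orbit whose polytope contains $v$ is $v$-semistable.

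Second, I will show that all semistable points of all components of $Z_z$ map to the same point of $\GX_v$. For this I evaluate invariant sections $s\in H^0(X,mL)_{mv}$ on the cycle, for instance through the Chow form or the norm/determinant of the restriction. Along the family, the ratio $[s_0(Z):\dots:s_N(Z)]$ is a continuous function. On generic orbits it equals the point of $\GX_v$, so on the limit it gives a single well-defined point. This defines a map on the closure $\Chow$ quotient set-theoretically, compatible with the rational map on $\GU$.

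Third, I will turn this pointwise assignment into a morphism. Consider the closure $\Theta$ of the graph of $\CX\dashrightarrow\GX_v$. By the previous step, the projection $\Theta\to\CX$ is bijective, and by properness it is finite. Since $\CX$ is normal, Zariski's main theorem makes it an isomorphism, which yields $\pi_v$. Birationality holds because $\pi_v$ restricts to the identity on the open set $\GU$.

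The main obstacle is the second step. One must check rigorously that distinct semistable components of a degenerate cycle are S-equivalent, that is, that they have the same image. My first approach is to work in the universal family $\cU\to\CX$ restricted to a curve through $z$. On $q^{-1}(X^{\ss}(v))$ the map to $\GX_v$ is defined, and it is constant on generic fibers. By rigidity over a connected (Zariski connected, since fibers of a flat family with normal base) fiber, it is then constant on the whole fiber intersected with the semistable locus.
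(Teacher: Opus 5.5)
The paper does not prove this statement; it quotes it from Kapranov \cite[Theorem~0.4.3]{Kapranov}. Your architecture is the standard route and can be completed: every limit cycle $Z_z$ meets $X^{\ss}(v)$, all of $Z_z\cap X^{\ss}(v)$ has a single image in $\GX_v$, and then the graph closure $\Theta$ plus Zariski's Main Theorem on the normal variety $\CX$ finish the argument. However, the step you yourself single out is not justified as written.

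The ``ratio $[s_0(Z):\dots:s_N(Z)]$'' has no meaning for a reducible cycle. Asserting its continuity along the family is exactly the extension property to be proved.

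The rigidity argument also fails, for three reasons. First, the map $\phi\circ q$, with $\phi\colon X^{\ss}(v)\to\GX_v$ the quotient map, is only defined on the open, non-proper set $q^{-1}(X^{\ss}(v))$. Second, connectedness of $Z_z$ does not give connectedness of $Z_z\cap X^{\ss}(v)$. Third, the rigidity lemma runs in the other direction: a contracted special fibre forces nearby fibres to be contracted. Connectedness is not the mechanism in any case. The projection $\mathrm{Bl}_p\P^2\to\P^1$ from $p$ is constant on the general fibres of $\mathrm{Bl}_p\P^2\to\P^2$, but not on the connected exceptional fibre.

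What works is separatedness over a normalized curve. Given $(z,w)\in\Theta$, take an irreducible curve in $\Theta$ through $(z,w)$ meeting the graph over $\GU$. Let $\tilde C$ be its normalization, with induced maps $\tilde C\to\CX$ and $h\colon\tilde C\to\GX_v$. Let $\cU_{\tilde C}\subset\tilde C\times X$ be the closure of the pulled-back family over the general points, with projections $p,q$. It is irreducible and flat over $\tilde C$. Its fibre over the point $\tilde z$ above $z$ is the support of $Z_z$, since the Chow limit is the fundamental cycle of the flat limit.

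Now work on the irreducible open set $\cU_{\tilde C}\cap q^{-1}(X^{\ss}(v))$. There the morphisms $\phi\circ q$ and $h\circ p$ agree over general points $c$. Indeed, over such $c$ the semistable part of the fibre is the closure in $X^{\ss}(v)$ of a single orbit, on which $\phi$ takes the value $h(c)$. Since $\GX_v$ is separated, the two morphisms agree everywhere. Evaluating at any $y\in Z_z\cap X^{\ss}(v)$ (nonempty by your first step) gives $w=\phi(y)$. This shows at once that $Z_z\cap X^{\ss}(v)$ has a single image and that $\Theta\to\CX$ is bijective, so your third step goes through.

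Two further remarks. Your first step rests on the covering of $\Delta$ by the polytopes of the components of a limit cycle (additivity of Duistermaat--Heckman measures in flat families). This carries the real geometric content and should at least be cited.

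Also, birationality requires $v$ in the interior of $\Gamma$; there $\pi_v$ restricts to the open immersion $\GU\hookrightarrow X^{s}(v)/H$, not to ``the identity''. On $\partial\Gamma$ one only gets a surjective morphism. For example, for $\C^*$ acting on $\P^2$ with weights $0,1,2$ and $v=0$, $\GX_0$ is a point while $\CX\simeq\P^1$.
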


\subsubsection{Limit quotients}

The GIT quotients $\{\GX_v\}_{v\in \Gamma}$ form an inverse system via natural morphisms $f_{v,u}: \GX_v\to \GX_u$ whenever $X^{\ss}_v\subset X^{\ss}_u$. Although the inverse limit may be reducible, there exists a distinguished irreducible component containing $\GU$. Its normalization $\LX$ is called the {\em normalized limit quotient}.

It comes with compatible birational morphisms $\chi_v: \LX \to \GX_v$ and satisfies the following universal property:

\begin{remark}\label{rem:UniversalPropertyLimit}
Any compatible system of birational morphisms to the $\GX_v$ factors uniquely through $\LX$.
\end{remark}

In our setting, the Chow and limit quotients coincide (cf. \cite[Corollary~2.6]{BHK}):

\begin{proposition}\label{prop:BHR}
If $H$ acts on a smooth projective variety $X$, then $\CX \simeq \LX$.
\end{proposition}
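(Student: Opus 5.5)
The plan is to construct birational morphisms in both directions between $\CX$ and $\LX$, and then check that they are mutually inverse.

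\emph{The morphism $\CX\to\LX$.} By Theorem~\ref{thm:invlim}, for every $v\in\Gamma$ we have a birational morphism $\pi_v:\CX\to\GX_v$. I first check that these are compatible with the transition maps, i.e.\ that $f_{v,u}\circ\pi_v=\pi_u$ whenever $X^{\ss}_v\subset X^{\ss}_u$. Both sides are morphisms from the integral variety $\CX$ to the separated variety $\GX_u$. They agree on the dense open subset $\GU$, where every map sends a generic orbit closure to its class, so they agree everywhere. Remark~\ref{rem:UniversalPropertyLimit} then gives a birational morphism $\varphi:\CX\to\LX$ that is the identity on $\GU$.

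\emph{The morphism $\LX\to\CX$.} Let $Z\subset\LX\times X$ be the closure of the graph of the generic family, namely the universal family of orbit closures over $\GU$. Equivalently, over a point $\ell=(\ell_v)_v$ the set-theoretic fiber $Z_\ell$ is contained in
\[
F_\ell=\{z\in X \mid z\in X^{\ss}_v \Rightarrow [z]=\ell_v \text{ in } \GX_v\}.
\]
I then want to apply the theory of families of cycles over a normal base (Kollár). This says that a closed subscheme $Z\to\LX$ whose fibers are all equidimensional of dimension $r=\dim H$, with $\LX$ normal, defines a well-defined family of $r$-cycles. Such a family induces a morphism $\LX\to\Chow(X)$. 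This morphism agrees with $\GU\hookrightarrow\Chow(X)$ on the dense open set, so it lands in the Chow quotient, and by normality of $\LX$ it lifts to $\psi:\LX\to\CX$. The composites $\psi\circ\varphi$ and $\varphi\circ\psi$ are identities on $\GU$. Since all varieties involved are integral and separated, both composites are identities, and $\CX\simeq\LX$.

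\emph{Main obstacle.} The hard step is proving that every fiber $Z_\ell$ has pure dimension $r$; this is where smoothness of $X$ enters. Upper semicontinuity already gives $\dim Z_\ell\ge r$, so the task is to rule out excess components. A component of $Z_\ell$ is a union of $H$-orbits. I would argue with the weight polytopes $\Delta(z)=\Conv\{\mu_L(Y)\mid Y\cap\overline{Hz}\ne\emptyset\}$, using the following two facts:
\begin{itemize}
\item $z$ is semistable for $v$ exactly when $v\in\Delta(z)$;
\item the orbits occurring in $Z_\ell$ have polytopes forming a polyhedral subdivision of $\Delta$, obtained as a limit of the generic orbit polytope.
\end{itemize}
First, I would show that for $v$ in the interior of a top-dimensional cell of this subdivision, the fiber of $X^{\ss}_v\to\GX_v$ over $\ell_v$, restricted to the relevant stratum, is a single closed orbit closure of dimension $r$. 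This would use Białynicki-Birula decompositions with respect to one-parameter subgroups, which are available because $X$ is smooth. Second, every point of $Z_\ell$ must lie in the orbit closure of such a cell. If that fails, the fallback is to bound $\dim Z_\ell$ by the dimension of $\ell_v$-fibers for $v$ generic in each cell. If both of these fail, I would invoke the argument of \cite[Corollary~2.6]{BHK} directly.
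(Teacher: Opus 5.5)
The paper gives no argument for this statement; it only cites \cite[Corollary~2.6]{BHK}. Your proposal has a genuine gap. The step you flag as the main obstacle is not a technical side issue: it is the whole content of the proposition, and your sketch does not prove it.

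\textbf{Why equidimensionality is the whole statement.} Every point of a Chow-limit cycle is a limit of points on nearby generic orbit closures. So $Z$ is the image of $\cU$ under $(\varphi\circ p,q)$, that is, $Z_\ell=\bigcup_{c\in\varphi^{-1}(\ell)}\operatorname{Supp}(c)$. Hence $Z_\ell$ is pure of dimension $r$ for all $\ell$ exactly when $\varphi$ has finite fibers. But a quasi-finite $\varphi$ is already an isomorphism by Zariski's Main Theorem, since it is proper and birational onto the normal variety $\LX$. So the detour through Koll\'ar's families and $\psi$ buys nothing.

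\textbf{Why your sketch does not establish it.} Your second bullet is circular. The subdivision property holds for a single Chow cycle, i.e.\ for the limit along one arc, whereas $Z_\ell$ collects the limits along all arcs tending to $\ell$. Saying that the orbits in $Z_\ell$ form one subdivision of $\Delta$ is saying that all cycles in $\varphi^{-1}(\ell)$ have the same support, which is what must be proved.

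The fallback also fails, for a precise reason. For chamber-generic $v$ you do get that the $v$-semistable part of $Z_\ell$ lies in one $r$-dimensional orbit $O_v$. But an excess component of $Z_\ell$ cannot have such orbits as its general orbits, since there are only finitely many $O_v$. So its general orbits have weight polytopes inside the walls and are semistable only for non-generic $v$. There the fiber of $X^{\ss}_v\to\GX_v$ over $\ell_v$ can contain positive-dimensional families of orbits. This is exactly the region your bounds do not reach, and Bia\l{}ynicki-Birula decompositions do not address it. Closing with ``invoke \cite{BHK}'' is just citing the result.

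\textbf{The missing ingredient.} You need the description of the points of $\CX$, applied to cycles rather than to $Z_\ell$. Every cycle $c$ in the Chow quotient is $\sum m_i\overline{O_i}$, with $O_i$ orbits of dimension $r$ whose weight polytopes form a subdivision of $\Delta$. This is due to Kapranov; alternatively, reduce to \cite{KSZ} via an $H$-equivariant embedding $X\subset\P^N$ meeting the big torus, so that generic $H$-orbits of $X$ are generic $H$-orbits of $\P^N$.

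Then argue as follows:
\begin{enumerate}
\item For $v$ chamber-generic in the interior of $\Delta(O_i)$, $O_i$ is the only $v$-semistable orbit in $\operatorname{Supp}(c)$. Hence $\pi_v(c)=[O_i]$, and the fiber of the geometric quotient over this point is $O_i$ itself.
\item Thus $\varphi(c)$ determines every component of $c$.
\item The multiplicities are bounded by $\deg_L c$, so $\varphi$ has finite fibers, and Zariski's Main Theorem concludes.
\end{enumerate}
Equivalently, $Z_\ell$ is the finite union of the closures $\overline{O_v}$, hence pure of dimension $r$, which is what your plan needed. Smoothness of $X$ plays no role in this argument.
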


\subsubsection{Combinatorial quotients}

Let $X$ be a toric variety with fan $\Sigma$, and let $H\subset T$ be a subtorus of the acting torus. The induced projection of lattices $q:\No(T)\to \No(T/H)$ defines a fan $q_*(\Sigma)$, called {\em quotient fan of $\Sigma$ by $q$}, whose cones are obtained by intersecting projected cones. The associated toric variety is the {\em combinatorial quotient} of $X$ by $H$.

\begin{proposition}\cite{KSZ}\label{prop:ChowToric}
If $X$ is a normal projective toric $T$-variety, endowed with the action of a subtorus $H\subset T$, the Chow quotient of $X$ by $H$ coincides with the combinatorial quotient.
\end{proposition}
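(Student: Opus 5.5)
Since the Chow quotient is, by Definition \ref{def:ChowQuotient}, a closure inside $\Chow(X)$ of the image of $\GU$, and both sides of the claimed equality are normal varieties acted on by $T/H$ with a dense orbit, I would prove the statement by showing that the normalized Chow quotient $\CX$ is a normal toric $T/H$-variety whose fan is exactly $q_*(\Sigma)$. The first step is to check that $\CX$ is toric. Let $x_0$ be a point of the open $T$-orbit and $Z_0=\overline{H\cdot x_0}$ the generic $H$-orbit closure. The action of $T$ on $X$ induces an action on $\Chow(X)$, and $H$ acts trivially on the cycles $\overline{H\cdot x}$. Hence $T/H$ acts on the Chow quotient. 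The orbit $(T/H)\cdot[Z_0]$ is dense in the closure of the image of $\GU$, because generic $H$-orbits are the $H$-orbits of points in the open $T$-orbit. The stabilizer of $[Z_0]$ is trivial, since $T$ acts freely on its open orbit. Passing to the normalization, $\CX$ is therefore a normal (projective) toric variety for $T/H$. It remains to identify its fan inside $\No(T/H)_\R$.

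For the second step I would use the standard description of a normal toric variety through limits of one-parameter subgroups. Two cocharacters $\lambda,\lambda'\in\No(T/H)$ lie in the relative interior of the same cone of the fan of $\CX$ if and only if the limits $\lim_{t\to 0}\lambda(t)\cdot[Z_0]$ and $\lim_{t\to 0}\lambda'(t)\cdot[Z_0]$ lie in the same $T/H$-orbit. So I would compute the limit cycle explicitly. Lift $\lambda$ to $\tilde\lambda\in\No(T)$; the choice of lift does not matter, because translating by $H$ does not move $Z_0$. The flat limit of $\tilde\lambda(t)\cdot Z_0$ is then a sum of $H$-orbit closures $\overline{H\cdot y_\sigma}$, where $y_\sigma$ is the distinguished point of the $T$-orbit attached to a cone $\sigma\in\Sigma$. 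These are exactly the cones $\sigma$ such that the affine space $\tilde\lambda+\No(H)_\R$ meets the relative interior of $\sigma$ in a set of maximal dimension $\operatorname{rk}H$. I would get this by covering $Z_0$ with the charts $\Spec\C[\sigma^\vee\cap \Mo(T)]$ and following the limit chart by chart, so that it reduces to a tropical or Gr\"obner-degeneration statement for the monomial parametrization of $H\cdot x_0$.

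Consequently the limit cycle, together with its $T/H$-orbit, is determined by the set of cones $\sigma$ with $q^{-1}(\lambda)\cap\operatorname{relint}\sigma\neq\emptyset$. Equivalently, it is determined by the intersection $\bigcap_{\sigma:\,\lambda\in q(\sigma)} q(\sigma)$, whose relative interior contains $\lambda$. This gives exactly the cones of $q_*(\Sigma)$, obtained by intersecting projected cones. It then follows that $\CX$ and the combinatorial quotient have the same fan, and hence they are isomorphic.

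The main obstacle is the limit computation in the second step. One must show that the flat limit in $\Chow(X)$ really is the sum of the orbit closures indexed by the cones met in maximal dimension, with the correct multiplicities, and that distinct cones of $q_*(\Sigma)$ give genuinely distinct limit cycles, not merely cycles with the same support. For this I would first try a reduction to the projective-space case. Embed $X$ equivariantly in $\P^N$ by a very ample $T$-linearized bundle, so that $Z_0$ becomes a projective toric variety with a known weight polytope. Then the limit cycles correspond to the faces of the fiber polytope (in the sense of Billera--Sturmfels) of the projection of the moment polytope of $X$ along $\Mo(T/H)$. The normal fan of that fiber polytope is known to be the common refinement of the projected cones, that is, $q_*(\Sigma)$.
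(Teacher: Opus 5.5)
The paper gives no proof of this statement; it simply quotes it from \cite{KSZ}. Your outline is essentially a reconstruction of the Kapranov--Sturmfels--Zelevinsky route, and your first step is fine. The key claim of your second step, however, is false.

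Suppose $q^{-1}(\lambda)=\tilde\lambda+\No(H)_\R$ meets $\operatorname{relint}\sigma$ in a set of dimension $\operatorname{rk}H$. Then $\No(H)_\R\subset\operatorname{span}\sigma$, so $H$ lies in the stabilizer $T_\sigma$ of $O_\sigma\cong T/T_\sigma$. Hence $\overline{H\cdot y_\sigma}=\{y_\sigma\}$ is a point, not a component of a $(\dim H)$-dimensional cycle. In general $\dim H\cdot y_\sigma=\dim H-\dim(\No(H)_\R\cap\operatorname{span}\sigma)$, so the components come from the opposite extreme. They are the cones $\sigma$ with $\operatorname{span}\sigma\cap\No(H)_\R=0$ whose relative interior meets $q^{-1}(\lambda)$, necessarily in a single point. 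In other words, they are indexed by the \emph{vertices} of the polyhedral subdivision of $q^{-1}(\lambda)$ induced by $\Sigma$ (dually, by the maximal cells of the $\pi$-coherent subdivision of the projected polytope). Their multiplicities are lattice indices depending only on $\sigma$.

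A small example already shows this. Take $X=\P^1\times\P^1$, $H$ the diagonal torus and $\tilde\lambda=(1,0)$. The curves $\tilde\lambda(t)\cdot Z_0=\{x=ty\}$ degenerate to $\{0\}\times\P^1+\P^1\times\{\infty\}$. These components come from the rays $e_1$ and $-e_2$, where the line $(1,0)+\R(1,1)$ passes from one quadrant to the next. Your rule instead returns the three fixed points $(0,0)$, $(0,\infty)$, $(\infty,\infty)$ of the quadrants the line traverses.

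The mistake is repairable, and once repaired you no longer need the fiber-polytope detour. The vertex cones are exactly the cones met by $q^{-1}(\lambda)$ whose span meets $\No(H)_\R$ trivially, so the set of cones met determines them. Conversely, for every $\sigma$ the polyhedron $q^{-1}(\lambda)\cap\sigma$ is the convex hull of the vertices lying in faces of $\sigma$, plus the recession cone $\sigma\cap\No(H)_\R$. So whether it meets $\operatorname{relint}\sigma$ (equivalently, whether the smallest face of $\sigma$ containing it is $\sigma$ itself) is determined by the vertex cones. Distinct vertex cones give components whose generic points lie in distinct $T$-orbits. Hence the support of the limit cycle recovers the cone of $q_*(\Sigma)$ containing $\lambda$ in its relative interior, and conversely. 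This is exactly the injectivity you wanted to outsource.

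As written, your last step instead rests on ``Chow limits $=$ coherent subdivisions'' and ``normal fan of the fiber polytope $=q_*(\Sigma)$''. That is precisely the content of \cite{KSZ} and of Billera--Sturmfels, i.e.\ the very citation the paper makes. You should also record that the normalization map of the Chow quotient (a projective toric variety of the form $X_A$) is bijective on torus orbits, so limits computed in $\Chow(X)$ really do detect the fan of $\CX$.
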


\begin{construction}\label{cons:combiquot} 
In this paper we will be especially interested in the case of combinatorial quotients of affine spaces. In order to do so, we consider a complex torus $T$ and a $\Z$-basis of its lattice of $1$-parametric subgroups $\No(T)$. 
The $\Z$-basis determines a positive orthant $\sigma_0\subset \No(T)_\R$, which is the fan of an affine toric variety which is an affine space $U$, with a set of affine coordinates determined by the $\Z$-basis.
We then consider the associated homomorphism of character lattices $\pi:\Mo(T)_\R\to \Mo(H)_\R$; using the dual $\Z$-basis of $\Mo(T)$, and choosing a $\Z$-basis of $\Mo(H)$, we can write this map as given by a matrix, that we call {\em weight matrix}, since its columns are the weights of the $H$-action on the coordinates of $U$.  A {\em transposed Gale dual} of this weight matrix represents the associated epimorphism $q: \No(T)_\R\to \No(T/H)_\R$, and the combinatorial quotient of $U$ is defined by the quotient fan $q_*(\Sigma(\sigma_0))$ of the fan of faces of $\sigma_0$. 

We note here that the determination of the quotient fan may be computationally expensive. However, we expect that, in the case in which $U$ is an affine open neighborhood in a rational homogeneous space, one should be able to describe the quotient fan in representation theoretical fashion.
\end{construction}

\section{Affine covers}\label{sec:main}

\begin{definition}\label{def:fulldef} 
Let $H$ be an algebraic torus acting on a finite dimensional complex vector space $U$. We say that the action is {\em fully definite} if $U^H=\{0\}$ and there exists a $\Z$-basis of $\Mo(H)$ with respect to which all the weights of the action have nonnegative coordinates. An action of $H$ on a projective smooth algebraic variety is said to be {\em fully definite at} $x\in X^H$, if it is fully definite on $T_{X,x}$.  
\end{definition}

\begin{lemma}\label{lem:projective}
Let $H$ be an algebraic torus acting on a finite dimensional complex vector space $U$. Assume that the action is fully definite. Then every GIT quotient of $U$ by the action of $H$ is projective.
\end{lemma}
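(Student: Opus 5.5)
The plan is to reduce projectivity of a GIT quotient of $U$ to the finite dimensionality of the invariant ring. Choose linear coordinates $x_1,\dots,x_n$ on $U$ diagonalizing the $H$-action, with weights $w_1,\dots,w_n\in \Mo(H)$. A GIT quotient of the affine space $U$ with respect to a character $\chi\in\Mo(H)$ (linearizing the trivial bundle) is
\[
\mathcal{G}U_\chi=\Proj \bigoplus_{m\ge 0} \C[x_1,\dots,x_n]_{m\chi},
\]
where $\C[x_1,\dots,x_n]_{m\chi}$ is the span of monomials $x^a$ with $\sum_i a_iw_i=m\chi$. By general GIT, this is projective over $\Spec \C[x_1,\dots,x_n]^H=\Spec\C[x_1,\dots,x_n]_0$. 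It therefore suffices to show that $\C[x_1,\dots,x_n]^H=\C$, i.e.\ that the only monomial of weight $0$ is the constant one.

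To see this, use the definition of full definiteness. There is a $\Z$-basis of $\Mo(H)$ in which every $w_i$ has nonnegative coordinates, and $U^H=\{0\}$ forces every $w_i\neq 0$. Let $\ell:\Mo(H)\to\Z$ be the sum of the coordinates in that basis. Then $\ell(w_i)>0$ for all $i$, so a monomial $x^a$ of weight $0$ satisfies $\sum_i a_i\ell(w_i)=0$ with $a_i\ge 0$, which gives $a=0$. Hence the degree-zero part of the graded algebra is $\C$, and the Proj is projective over a point. For completeness I would also check that each graded piece is finite dimensional, which is the same argument: $\sum a_i\ell(w_i)=m\ell(\chi)$ has finitely many solutions, bounding the degree of every monomial by $m\ell(\chi)/\min_i\ell(w_i)$.

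The main (mild) obstacle is being precise about what ``GIT quotient of $U$'' means here, since $U$ is affine rather than projective as in Section~\ref{sec:prelim}. I would state the standard fact that $\Proj$ of a finitely generated graded algebra $\bigoplus_{m}A_m$ is projective over $\Spec A_0$. Finite generation holds because the algebra is the invariant ring of $H\times\C^*$ acting on $\C[x_1,\dots,x_n,t]$ with $t$ of weight $-\chi$. If one prefers a toric phrasing, the same conclusion says that every quotient fan considered in Construction~\ref{cons:combiquot} is complete. This follows because $\ell$ is strictly positive on the whole cone spanned by the weights, which is equivalent to the vanishing of the invariant ring.
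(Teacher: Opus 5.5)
Your proof is correct. It reaches the same key fact as the paper by a somewhat different route.

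\textbf{The paper's route.} The paper views $U$ as an affine toric variety for the big torus $T\simeq(\C^*)^N$. It uses the toric description of GIT quotients to identify the quotient at $v$ with the $T/H$-toric variety of the polyhedron $\pi^{-1}(v)\cap P$. It then shows this polyhedron is bounded: an unbounded direction would be a nonzero vector of $P\cap\ker(\pi)$, which is impossible because the weights are nonzero with nonnegative coordinates.

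\textbf{Your route.} You instead use the general fact that $\Proj$ of a finitely generated graded algebra is projective over the spectrum of its degree-zero part. You then show that this part, $\C[x_1,\dots,x_n]^H$, is $\C$ via the strictly positive functional $\ell$.

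\textbf{Why they agree.} The two conditions are literally the same. The recession cone of $\pi^{-1}(v)\cap P$ is $P\cap\ker(\pi)$, and its lattice points are exactly the exponents of the $H$-invariant monomials.

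\textbf{What each buys.} Your version is more self-contained and more general: it avoids the toric description of GIT quotients altogether. The same argument shows that any GIT quotient of an affine variety by a reductive group with only constant invariants is projective. The paper's version gives more: an explicit polytope for each GIT quotient. This is what is used afterwards to describe the combinatorial quotient as a Minkowski-sum toric variety (Corollary~\ref{cor:projective}) and to identify semistable loci in Proposition~\ref{prop:maptocomb}.

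\textbf{A small slip.} The algebra $\bigoplus_{m\ge 0}\C[x_1,\dots,x_n]_{m\chi}$ is the ring of $H$-invariants of $\C[x_1,\dots,x_n,t]$, with $t$ of $H$-weight $-\chi$ and $t$ recording the grading. It is not the ring of $H\times\C^*$-invariants: if an extra $\C^*$ acted nontrivially on $t$, you would recover only the degree-zero part. Finite generation then follows from finite generation of torus invariants (Gordan's lemma), as you intended.
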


\begin{proof}
Let us start by considering a set of coordinates $(x_1,\dots,x_N)$ in $U$ with respect to a basis of $H$-eigenvalues of $U$. Since the action is fully definite, we may identify $H\simeq (\C^*)^n$, $\Mo(H)\simeq \Z^n$ so that we may write, for every $h\in H$:
\[h(x_1,\dots,x_N)=(h^{m_1}x_1,\dots,h^{m_N}x_N)\text{, with }m_i\in (\Z_{\geq 0})^n\setminus\{0\}.
\]

In order to compute the GIT quotients of $U$, we consider it as an affine toric variety with respect to the $N$-dimensional torus $$T=\{(x_1,\dots,x_N)|\,\, x_i\neq 0 \mbox{ for every $i$}\}\subset U.$$ Denoting by $P$ the nonnegative orthant in $\Mo(T)=\Z^N$, we have that $U=\Spec(\C[P\cap \Mo(T)])$, and, following \cite[Definition 2.16, Remark 3.6]{CML}, the GIT quotients of $U$ are in bijective correspondence with the chamber decomposition of the image of $P$ via the natural map:
$$
\pi:\Mo(T)_\R\lra \Mo(H)_\R \mbox{ (sending the canonical basis element $e_i$ to $m_i$)}.
$$
This chamber decomposition is given by the intersections of the images via $\pi$ of the faces of $P$. Given an element  $v\in \pi(P)\cap \Mo(H)$, the corresponding GIT quotient is the $T/H$-toric variety associated to the (a priori non bounded) polytope $\pi^{-1}(v)\cap P$ (see \cite[Lemma 3.2]{CML}). Thus in order to prove that they are projective, it suffices to show that $\pi^{-1}(v)\cap P$ is bounded for every $v$. 

But if $\pi^{-1}(v)\cap P$ were unbounded for some $v$, there would exist $u_0\in\pi^{-1}(v)\cap P$, and a vector $u\in \ker(\pi)$ such that $u_0+ku\in \pi^{-1}(v)\cap P$, for every $k\geq 0$, from which it would follow that $\R_{\geq 0}u$ would be contained in $P\cap \ker(\pi)$. This contradicts that, with respect to a certain set of coordinates, all the $m_i$'s have nonnegative coordinates, and no $m_i$ is equal to zero. 
\end{proof}

\begin{corollary}\label{cor:projective}
In the situation of Lemma \ref{lem:projective}, the combinatorial quotient $\CU$ by the action of $H$ is the inverse limit of the GIT quotients of $U$ by the action of $H$, and it is projective.
\end{corollary}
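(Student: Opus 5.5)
The plan is to work entirely on the toric side, using the set-up in the proof of Lemma \ref{lem:projective}. There $U$ is regarded as the affine toric variety with torus $T\simeq(\C^*)^N$ and fan $\Sigma(\sigma_0)$ consisting of the faces of the positive orthant. The first step is to identify the combinatorial quotient $\CU$ with the toric variety of the quotient fan $q_*(\Sigma(\sigma_0))$ in $\No(T/H)_\R$, as in Construction \ref{cons:combiquot}. The second step is to identify each GIT quotient $\GU_v$, for $v\in\pi(P)\cap\Mo(H)$, with the toric variety of the normal fan of the polytope $Q_v:=\pi^{-1}(v)\cap P$. This follows from \cite[Lemma 3.2]{CML}, and by Lemma \ref{lem:projective} these polytopes are bounded, so every $\GU_v$ is projective. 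The statement then reduces to two toric-combinatorial facts:
\begin{enumerate}[label=(\roman*)]
\item the quotient fan is complete;
\item the quotient fan is the common refinement of the normal fans of the $Q_v$, with the refinement maps giving the morphisms $\GU_v\to\GU_u$ of the inverse system.
\end{enumerate}

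For (i), I would use the standard Gale-duality fact that $q(\sigma_0)=\No(T/H)_\R$ exactly when $\ker\pi\cap P=\{0\}$. The dual of the cone $q(\sigma_0)$ is $\ker(\pi)\cap P$, viewed inside $\Mo(T/H)_\R=\ker\pi$. The end of the proof of Lemma \ref{lem:projective} shows that this cone is zero, because of the fully definite hypothesis. Hence the support of $q_*(\Sigma(\sigma_0))$, which equals $q(\sigma_0)$, is the whole space, and the quotient fan is complete.

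For (ii), I would use the description of the quotient fan via fibers. For $w\in\No(T/H)_\R$, its cone in $q_*(\Sigma(\sigma_0))$ is the intersection of all projected faces $q(\tau)$ containing $w$. On the other side, the normal fan of $Q_v$ records, for each $w$ (read as a linear functional on $\ker\pi$), the face of $Q_v$ on which $w$ is minimized. That face has the form $\pi^{-1}(v)\cap F$ for a face $F$ of $P$. The claim to verify is that two vectors $w,w'$ lie in the relative interior of the same cone of the quotient fan if and only if they select the same face on every $Q_v$. This is the fiber-polytope / Billera--Sturmfels type statement: the quotient fan equals the normal fan of the Minkowski sum (equivalently, the fiber polytope) of the polytopes $Q_v$, where $v$ ranges over one representative per chamber. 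Only finitely many combinatorial types occur, because the chamber decomposition of $\pi(P)$ is finite. Since the normal fan of a Minkowski sum is the common refinement of the normal fans, this gives (ii). Projectivity of $\CU$ follows at the same time, because $\CU$ is the toric variety of the normal fan of a single polytope, namely that Minkowski sum. This is the step I expect to be the main obstacle, and I would first try to prove the face-by-face correspondence directly by linear programming duality in $P$, rather than quoting fiber polytope theory.

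Finally, I would conclude the inverse-limit statement. Each refinement of fans induces a toric birational morphism $\CU\to\GU_v$, and these morphisms are compatible with the maps $f_{v,u}$, since all of them are refinement maps. By Remark \ref{rem:UniversalPropertyLimit}, they factor through the normalized limit quotient. Conversely, the distinguished component of the inverse limit is a normal toric variety, and by the common-refinement description its fan is exactly $q_*(\Sigma(\sigma_0))$. The resulting morphism is therefore an isomorphism. An alternative route is to compactify $U$ $H$-equivariantly to a projective toric variety and combine Proposition \ref{prop:ChowToric} with Proposition \ref{prop:BHR}, but the direct fan argument above avoids dealing with the boundary.
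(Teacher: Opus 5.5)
Your proposal is correct and follows essentially the same route as the paper. The paper simply quotes \cite[Theorem 1.1]{CML} for the identification of the quotient-fan toric variety with the normalized main component of the inverse limit of the GIT quotients, which is the content of your step (ii) that you propose to reprove via fiber polytopes and linear programming duality; it then deduces projectivity, as you do, from the Minkowski sum of the bounded polytopes $\pi^{-1}(v)\cap P$. One small imprecision: the distinguished component of the inverse limit need not itself be normal, and it is its normalization that is the normal toric variety whose fan is the common refinement.
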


\begin{proof}
Note that, after some translations of notation, the fact that the combinatorial quotient $\CU$ is the limit quotient of $U$ by $H$ follows from \cite[Theorem 1.1]{CML}. 
Finally $\CU$ is projective because as a toric variety, its associated polytope is the Minkowski sum of the polytopes of its GIT quotients (cf. \cite[Definition 2.2]{CML}).  
\end{proof}

\begin{remark}\label{rem:contract}
Note that 
the morphisms from $\CU$ to geometric quotients of $U$ are toric contractions.
\end{remark}

\begin{lemma}\label{lem:BBfulldef}
Let $H$ be an algebraic torus acting on a smooth projective variety $X$, and $x\in X$ be a fixed point at which the action is fully definite. Then there exists a unique $H$-invariant open neighborhood $U_x\subset X$ of $x$ such that:
\begin{itemize}
\item there exists an $H$-equivariant isomorphism $U_x \simeq T_{X,x}$;
\item $U_x$ is the set of points $y\in X$ such that $x\in \ol{H y}$.
\end{itemize}
\end{lemma}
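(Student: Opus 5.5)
Fully definiteness lets us pick a one-parameter subgroup $\lambda:\C^*\to H$ such that all weights of $H$ on $T_{X,x}$ pair strictly positively with $\lambda$; the plan is to reduce everything to the Białynicki-Birula decomposition for $\lambda$. Concretely, choose the $\Z$-basis of $\Mo(H)$ from Definition \ref{def:fulldef} and take $\lambda$ to be the sum of the dual basis elements of $\No(H)$. Every weight $m_i$ is nonzero with nonnegative coordinates, so $\langle m_i,\lambda\rangle>0$. It follows that $x$ is an isolated fixed point of $\lambda$, that $x$ is a sink of the $\C^*$-action given by $\lambda^{-1}$, and that $X^\lambda$ has $x$ as an isolated component whose normal weights are all negative for $\lambda^{-1}$ (equivalently, all positive for $\lambda$).

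Define $U_x$ to be the Białynicki-Birula cell $X^-(x)=\{y\in X\mid \lim_{t\to\infty}\lambda(t)y=x\}$ (or $t\to 0$, depending on conventions). Since all weights at $x$ have one sign, this cell has dimension $\dim X$, and by the Białynicki-Birula theorem it is an open affine subset of $X$. Since $H$ is commutative and fixes $x$, the cell is $H$-invariant. For the equivariant isomorphism with $T_{X,x}$, I would invoke the $H$-equivariant form of the Białynicki-Birula theorem: the cell is $H$-equivariantly isomorphic to the positive-weight part of $T_{X,x}$, which here is all of $T_{X,x}$. Alternatively, linearize directly: pick $H$-eigenfunctions $f_1,\dots,f_N$ on the affine $U_x$ whose differentials at $x$ form a basis of $T^*_{X,x}$. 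This is possible because $\C[U_x]$ is a graded ring with the maximal ideal at $x$ $H$-stable, so one can take homogeneous lifts of a weight basis of $\mathfrak m/\mathfrak m^2$. Since all weights pair positively with $\lambda$, the $\lambda$-grading of $\C[U_x]$ is positive with degree-zero part $\C$, so the graded Nakayama lemma shows the $f_i$ generate $\C[U_x]$. The resulting equivariant closed embedding $U_x\to T_{X,x}$ between smooth varieties of equal dimension is then an isomorphism.

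For the second bullet, let $y\in U_x$. Then $x=\lim\lambda(t)y\in\ol{Hy}$. Conversely, suppose $x\in\ol{Hy}$. Then $\ol{Hy}$ meets the open set $U_x$, so $Hy\cap U_x\neq\emptyset$, and since $U_x$ is $H$-invariant, $y\in U_x$. Uniqueness follows: any open set satisfying the second bullet equals $\{y: x\in\ol{Hy}\}$, which the argument above identifies with $U_x$.

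The main obstacle I expect is the equivariant linearization, specifically making sure that the Białynicki-Birula isomorphism is $H$-equivariant and not merely $\C^*$-equivariant. The graded Nakayama argument on the $H$-graded coordinate ring is the route I would use to settle it. It needs $U_x$ affine and the $\lambda$-weights on $\C[U_x]$ to be nonnegative with only constants in degree $0$. Both facts follow from the $\lambda$-equivariant isomorphism with an affine space having positive weights that the Białynicki-Birula theorem already provides.
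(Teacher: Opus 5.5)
Your proof is correct and takes the same route as the paper. The equivariant affine neighbourhood comes from Bia\l{}ynicki-Birula theory, and the characterization $U_x=\{y\mid x\in\ol{Hy}\}$, hence uniqueness, follows from the same argument: an open $H$-invariant neighbourhood of $x$ that meets $\ol{Hy}$ must meet, and therefore contain, $Hy$. The only difference is that the paper quotes the $H$-equivariant linearization directly from \cite[Theorem 2.5]{BB}, whereas you derive it from the $\C^*$-Bia\l{}ynicki-Birula cell of a one-parameter subgroup $\lambda$ pairing positively with all weights, followed by graded Nakayama; this is a valid unpacking of that citation.
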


\begin{proof}
The existence of an $H$-invariant open neighborhood $U_x\subset X$ of $x$ which is  $H$-equivariantly isomorphic to $T_{X,x}$ is guaranteed by \cite[Theorem 2.5]{BB}. Since the action is fully definite on $x$, it is then obvious that $0\in T_{X,x}$ is contained in the closure of every $H$-orbit in $T_{X,x}$, hence it holds also that $x\in \ol{H y}$ for every $y\in U_x$. Finally, every point $y\in X$ such that  $x\in \ol{H y}$ belongs to $U_x$ because $U_x$ is an $H$-invariant neighborhood of $x$. 
\end{proof}

\begin{definition}\label{def:boundx}
In the assumptions of Lemma \ref{lem:BBfulldef}, the second property implies that $U_x$ is unique, and we call it the {\em Bruhat neighborhood of $x$}. Since $X$ is assumed to be smooth, the complement $X\setminus U_x$ is a finite union of prime Cartier divisors 
\end{definition}

\begin{lemma}\label{lem:ampleb}
Let $H$ be an algebraic torus acting on a smooth projective variety $X$, and $x\in X$ be a fixed point at which the action is fully definite. Then there exists an ample divisor $D$ on $X$ supported on the $H$-horizon of $x$. 
\end{lemma}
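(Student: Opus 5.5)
The plan is to produce the divisor as the zero locus of a suitable $H$-eigensection of a very ample line bundle, exploiting that on $U_x\simeq T_{X,x}$ the only $H$-invariant regular functions are the constants. Throughout I will read the \emph{$H$-horizon of $x$} as the complement $X\setminus U_x$ of the Bruhat neighborhood of Definition \ref{def:boundx}.

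Fix a very ample line bundle $L$ on $X$ together with a linearization of it. Since $L$ is globally generated, the evaluation map $H^0(X,L)\to L_x$ is surjective. It is $H$-equivariant, and $H$ acts on $L_x$ through the character $\mu_L(x)$. Decomposing $H^0(X,L)=\bigoplus_\tau H^0(X,L)_\tau$, only the summand of weight $\mu_L(x)$ can map nontrivially to $L_x$. Hence there is an eigensection $\sigma\in H^0(X,L)_{\mu_L(x)}$ with $\sigma(x)\neq 0$. I will set $D:=\div(\sigma)$, which is effective and ample. It remains to show that $\sigma$ does not vanish anywhere on $U_x$.

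For this, use the $H$-equivariant isomorphism $U_x\simeq T_{X,x}$ of Lemma \ref{lem:BBfulldef}. Since $U_x$ is an affine space, $\operatorname{Pic}(U_x)=0$, so $L|_{U_x}$ is trivial. I would pick a nowhere-vanishing trivializing section $s$ of $L|_{U_x}$ that is an $H$-eigensection. This is possible because a nowhere-vanishing section is unique up to the units of $\C[U_x]$, which are the constants, so $H$ acts on $s$ by a character. Evaluating at the fixed point $x$ shows this character is $\mu_L(x)$. Then $\sigma|_{U_x}=f\,s$ with $f\in\C[U_x]$ an $H$-invariant polynomial, that is, of weight $0$.

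Now apply full definiteness. In eigen-coordinates $x_1,\dots,x_N$ on $T_{X,x}$, the monomial $x^a$ has weight $\sum a_i m_i$ with all $m_i\in(\Z_{\ge0})^n\setminus\{0\}$. This weight vanishes only when $a=0$. So $f$ is constant, and $f=f(0)\neq0$ because $\sigma(x)\neq 0$. Therefore $\sigma$ has no zeros on $U_x$, and $\operatorname{Supp}D\subset X\setminus U_x$, as required.

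The only delicate step is choosing the trivialization $s$ to be $H$-semi-invariant with the right weight, so that "weight $0$" for $f$ is meaningful. I would handle it as above, via $\C[U_x]^*=\C^*$ and evaluation at $x$. Everything else is formal.
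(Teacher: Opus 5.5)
Your proof is correct, and it reaches the same key object as the paper by a more direct route. The paper downgrades $H$ to a $\C^*$-action with sink $x$, takes a very ample $D'$ not through $x$, and passes to the limit $D'_+=\lim_{t\to 0}tD'$ in $|D'|$. At the level of sections, this limit extracts the component of weight $\mu_L(x)$, which is the unique extremal weight for the downgrading; that is, an eigensection of exactly the kind you call $\sigma$. Projecting onto $\HH^0(X,L)_{\mu_L(x)}$, as you do, makes the $H$-invariance of $\div(\sigma)$ immediate, whereas the paper has to assert that the $\C^*$-limit is $H$-invariant. Conversely, the paper's argument for disjointness from $U_x$ is shorter and would let you skip the trivialization of $L|_{U_x}$. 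It goes as follows: $\div(\sigma)$ is closed, $H$-stable and misses $x$, while every $H$-orbit closure in $U_x$ contains $x$ (Lemma \ref{lem:BBfulldef}). Your invariant-polynomial argument is equally valid.

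You omit one step that the paper takes. It sets $D=mD'_++\sum_i D_i$ with $m\gg 0$, where the $D_i$ are the components of $X\setminus U_x$, so that $\mathrm{Supp}\,D$ equals $X\setminus U_x$ rather than merely being contained in it. Equality is what Proposition \ref{prop:maptocomb} uses, through $\HH^0(U_x,\cO_{U_x})=\sum_{r}\HH^0(X,rD)$. You can simply add this step. Alternatively, you can note that equality already holds for your $D$, and say why: $X\setminus\mathrm{Supp}\,D$ is an $H$-stable affine open set containing $U_x$, and its regular functions restrict to polynomials on $U_x$. Hence, for a one-parameter subgroup positive on all weights at $x$, every orbit in it has a limit inside it. That limit must be $x$, which forces the set to equal $U_x$.
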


\begin{proof}
Consider a downgrading of the action of $H$ to a $\C^*$-action with sink $x$, and let $D'$ be a very ample effective divisor not containing $x$. Then 
 $D'_+= \lim_{t \to 0} tD'$  is a $\C^*$-invariant effective divisor in the linear system $|D'|$ not containing $x$. We notice that $D'_+$ is also $H$-invariant, hence $D'_+ \cap U_x= \emptyset$. Then, denoting by $D_1,\dots,D_k$ the irreducible components of the $H$-horizon of $x$, we may conclude by taking $D=mD'_++\sum_{i=1}^k D_i$ for $m>>0$. 
\end{proof}

\begin{proposition}\label{prop:maptocomb} 
Let $X$ be a smooth projective variety, endowed with the action of an algebraic torus $H$. Assume that the action is fully definite at a fixed point $x\in X^H$. Then every GIT quotient of $U_x$ is also a GIT quotient of $X$, and in particular there exists a contraction: 
\[\CX\lra \CU_x.\]
\end{proposition}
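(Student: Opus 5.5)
The plan is to compare linearizations on $X$ and on $U_x$ through the ample divisor of Lemma \ref{lem:ampleb}. Let $D=\sum a_iD_i$ be the ample divisor supported on the $H$-horizon $X\setminus U_x=\bigcup D_i$, and set $L=\cO_X(D)$. Each $D_i$ is $H$-invariant, so the canonical section $s_D$ is an $H$-eigensection. After twisting by a character, we may assume $s_D$ has weight $0$. Then $U_x=X\setminus \operatorname{supp}(D)=X_{s_D}$ is the nonvanishing locus of $s_D$. The standard identification gives
\[
\C[U_x]=\bigcup_{m\geq 0} H^0(X,mL)\,s_D^{-m},
\]
and this identification respects weights. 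Consequently, for every character $u$, semi-invariants of weight $mu$ on $U_x$ come from sections of $H^0(X,m'L)_{mu}$ for $m'\gg 0$.

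The first key step compares the graded rings $\cA_u$ on both sides. For the affine space $U_x$ with linearization given by $u\in\Mo(H)$, the GIT quotient is $\Proj\bigoplus_m \C[U_x]_{mu}$. The step I would carry out is to show that this ring agrees, up to Veronese and the choice of $L$, with $\cA_{v}$ for $(X,kL)$ at a suitable point $v$ of the weight polytope, namely $v=u/k$ after rescaling. More precisely, I would show that for $k\gg0$ the ring $\bigoplus_m H^0(X,mkL)_{mu}$ equals $\bigoplus_m \C[U_x]_{mu}$ via multiplication by $s_D^{-mk}$.

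Injectivity of this map is clear. Surjectivity is the main obstacle: a semi-invariant $f$ on $U_x$ has poles of bounded order along the $D_i$, but the bound must be uniform in $m$ for this to work. I would handle this through finite generation. The ring $\bigoplus_m\C[U_x]_{mu}$ is finitely generated, since $U_x$ is affine toric. Choose $k$ large enough to clear the poles of a finite set of generators. The generators then lie in the $X$-ring, and products of generators stay there because pole orders add linearly in $m$.

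Alternatively, the semistable loci can be compared directly. A point $y\in X$ is semistable for $(kL,u)$ if some invariant section is nonvanishing at $y$. Every such semistable point lies in $U_x$: if $y\notin U_x$, then $y\in D$, and I would need that all invariant sections of the relevant weights vanish on $D$. The choice of large $k$ with poles cleared guarantees that the sections arise as $f s_D^{mk}$ with $f$ regular on $U_x$, and hence vanish along $D$ once $k$ exceeds the pole orders. So $X^{\ss}=U_x^{\ss}$, and the quotients coincide; their projectivity is consistent with Lemma \ref{lem:projective}.

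With this in hand, every GIT quotient $\mathcal{G}U_{x,u}$ equals some $\GX_v$. The inclusions of semistable loci within $U_x$ are inclusions within $X$, so the inverse system of GIT quotients of $U_x$ is a subsystem of that of $X$. The maps $\pi_v$ of Theorem \ref{thm:invlim} then give a compatible system of birational morphisms $\CX\to \mathcal{G}U_{x,u}$. By Corollary \ref{cor:projective}, $\CU_x$ is the normalized limit of this system, so the universal property (Remark \ref{rem:UniversalPropertyLimit}, applied to $U_x$) yields a morphism $\CX\to\CU_x$. This morphism is birational and $\CU_x$ is normal, so it is a contraction. Proposition \ref{prop:BHR} lets us use $\LX$ in place of $\CX$ throughout if needed.
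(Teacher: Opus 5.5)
Your proposal is correct and follows essentially the same route as the paper. The paper takes an ample divisor $D$ supported on the $H$-horizon of $x$ and a large enough multiple so that the relevant weight-$u$ functions on $U_x$ extend to sections of $\cO_X(kD)$ vanishing on $\operatorname{supp} D$; this gives equality of semistable loci, and the limit-quotient description of $\CX$ and $\CU_x$ then yields the contraction $\CX\to\CU_x$, exactly as you propose. The only difference is technical: you bound the poles uniformly through finitely many generators of $\bigoplus_m\C[U_x]_{mu}$, whereas the paper uses the finitely many lattice points of the bounded polytope $\pi^{-1}(v)\cap P$. Your version has the small advantage of controlling all degrees $m$ explicitly, not only degree one.
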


\begin{proof}
Let us remark that, since the Picard group of the affine space $U_x$ is trivial, then a GIT quotient of $U_x$ is determined by a linearization $v$ of the action of $H$ on $\cO_{U_x}$. 
As in the proof of Lemma \ref{lem:projective} we consider the structure of $U_x\simeq T_{X,x}$ as a toric variety with respect to an open torus $T\subset U_x$, and isomorphisms $T\simeq (\C^*)^{\dim X}$, $H\simeq (\C^*)^n$ so that the natural inclusion $H\subset T$ induces a homomorphism $\pi:\Z^{\dim(X)}\to \Z^n$ given by a matrix whose columns are nonzero and have nonnegative coefficients.
The $T$-action on the polynomial ring $\HH^0(U_x,\cO_{U_x})$ decomposes it as 
$$
\HH^0(U_x,\cO_{U_x})\simeq\bigoplus_{m\in P\cap \Z^n}\C \chi^m,
$$
where $P$ denotes the nonnegative orthant in the vector space $\R^n\simeq\Mo(T)\otimes_\Z\R$.
A linearization of the action is then determined by an element $v\in \pi( P\cap \Z^n)$, and the corresponding GIT quotient $(\GU_x)_v$ is the $T/H$ toric variety whose moment polytope is $\pi^{-1}(v)\cap P$. From the proof of Lemma \ref{lem:projective}, we know that $\pi^{-1}(v)\cap P$ is bounded. The unstable points for this linearization are precisely the points $y\in U_x$ such that $\chi^m(y)=0$ for every $m\in \pi^{-1}(v)\cap P\cap \Z^n$.

On the other hand, we consider the ample divisor $D$ supported on the $H$-horizon of $x$ provided by Lemma \ref{lem:ampleb}, and the vector spaces 
\[\HH^0(X,rD)=\{f\in\C(X)|\,\,\div(f)+rD\geq 0\}, \quad r\geq 0.\] 
We have that:
\[
\begin{array}{l}
\HH^0(X,rD)\subset \HH^0(X,(r+1)D),\mbox{ for every }r,\\[4pt]
\displaystyle\HH^0(U_x,\cO_{U_x})=\sum_{r\geq 0}\HH^0(X,rD).
\end{array}
\]
Then there exists  $r>0$ such that:
\[
\bigoplus_{m\in \pi^{-1}(v)\cap P\cap \Z^n}\C \chi^m\subset \HH^0(X,(r-1)D).
\]
The linearization of $v$ on $\cO_{U_x}$ extends to a linearization of the $H$-action on the line bundle $\cO_X(rD)$, and the corresponding space of invariant sections is still generated by $\{\chi^m|\,\, m\in \pi^{-1}(v)\cap P\cap \Z^n\}$. But, by our choice of $r$, all these sections vanish on the support of $D$, that is, the $H$-horizon of $x$ in $X$. It then follows that the set of semistable points with respect to $v$ in $U_x$ and on $(X,D)$ is the same, from which it follows that $\GU_x$ is a GIT quotient of $X$, polarized with the line bundle $D$. This concludes the proof of the first part.

Now, since we have shown that every GIT quotient of $U_x$ is also a GIT quotient of $X$, it follows that $\CX$, which is the limit quotient of $X$, admits a birational, dominant morphism to the limit quotient of $U_x$, which is $\CU_x$. Since both $\CX$ and $\CU_x$ are normal, the map is a  contraction. 
\end{proof}

Note that, by construction, the quotients $\CU_x$, $x\in X^H$, are birationally equivalent, hence there exists an irreducible subvariety $V\subset \prod_{x\in X^H}\CU_x$ dominating birationally every $\CU_x$, and the induced birational map $\CX\to \prod_{x\in X^H}\CU_x$ factors through the normalization $\ol{V}$ of $V$: 
\[
\CX \lra \ol{V}\stackrel{\mbox{\tiny norm.}}{\lra} V\subset \prod_{x\in X^H}\CU_x.
\]
Then the proof of Theorem \ref{thm:main} is concluded by showing:
\begin{proposition}\label{prop:maptocomb2}
Let $X$  be a  smooth projective variety, endowed with the action of an algebraic torus $H$, fully definite on every fixed point. Then the induced map 
$\CX\to \ol{V}$ is an isomorphism. 
\end{proposition}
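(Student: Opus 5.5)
The plan is to use the universal property of the normalized limit quotient, Remark \ref{rem:UniversalPropertyLimit}, together with Proposition \ref{prop:BHR}, which identifies $\CX\simeq\LX$. Composing the normalization $\ol{V}\to V$ with the projections gives birational contractions $\ol{V}\to\CU_x$ for every fixed point $x$. Composing further with the natural contractions $\CU_x\to(\GU_x)_v$, which exist by Corollary \ref{cor:projective} and Remark \ref{rem:contract}, we obtain birational morphisms from $\ol{V}$ to every GIT quotient of every Bruhat neighborhood $U_x$. By Proposition \ref{prop:maptocomb}, each of these is a GIT quotient of $X$. The key step is then to show that \emph{every} GIT quotient $\GX_v$ of $X$ arises in this way. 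Once this holds, $\ol{V}$ carries a compatible system of birational morphisms to all the $\GX_v$, so it factors through $\LX\simeq\CX$. The resulting map $\ol{V}\to\CX$ is inverse to $\CX\to\ol{V}$ on the common open set $\GU$, hence everywhere, since both varieties are normal and separated. It remains to check compatibility: the maps $\ol{V}\to\GX_v$ obtained through different $x$ must agree. They do agree on the dense open subset $\GU$, and morphisms between separated varieties that agree on a dense open set coincide.

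For the key step, fix $v$ in the interior of some chamber of $\Gamma$; it is enough to treat such $v$, because quotients for non-generic $v$ are dominated by those for nearby generic ones and the factorization passes to them. The semistable locus $X^{\ss}_v$ is an open $H$-invariant set. I would show that it lies entirely inside a single Bruhat neighborhood $U_x$, and that on $U_x$ it coincides with the semistable locus of a suitable linearization of $\cO_{U_x}$.

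To produce $x$, I would choose a vertex $\mu_L(x)$ of $\Delta$. Full definiteness means that at $x$ all weights of $T_{X,x}$ lie in a pointed cone, so $\mu_L(x)$ is a vertex of $\Delta$; this is the content of ``extremal'' in \cite{BB}. Given $y\in X^{\ss}_v$, the weight polytope of $\ol{Hy}$ contains $v$. Its vertices are the weights $\mu_L(z)$ of the fixed points $z\in\ol{Hy}$, and $\ol{Hy}$ passes through $z$ exactly when $y\in U_z$, by Lemma \ref{lem:BBfulldef}. One would like a single fixed point $x$, depending only on the chamber of $v$, lying in the closure of every semistable orbit. This is false in general. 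The argument has to be reorganized: instead of a single $U_x$ for each chamber, one should use that $\GX_v$ is covered by the images of the open sets $X^{\ss}_v\cap U_z$. The factorization through $\LX$ can then be checked locally on $\GX_v$, gluing the local maps $\ol{V}\to(\GU_z)_{v_z}\supset$ open pieces of $\GX_v$.

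A cleaner alternative, which I would try first, is to argue directly. The map $\CX\to\ol{V}$ is a birational morphism of normal projective varieties, so by Zariski's main theorem it suffices to show that it is finite, that is, that it contracts no curve. Suppose a curve $C\subset\CX$ is contracted. Then every $\CU_x$ contracts $C$, and so every $\GX_v$ that factors through some $\CU_x$ contracts $C$. If $C$ is contracted by all the $\GX_v$, then it is contracted in $\LX=\CX$ by the universal property, which is absurd. So the same gap remains: we must show that every chamber quotient $\GX_v$ is dominated by some $\CU_x$, or at least that curves are detected locally by the $U_x$. I would attack this through the family of cycles: a point of $\CX$ is an $H$-invariant cycle whose components are orbit closures, each containing a fixed point whose Bruhat cell it meets. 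Its restriction to $U_x$ is a point of $\CU_x$, because $\CU_x$ is the Chow quotient of $U_x$ in the toric sense (Proposition \ref{prop:ChowToric}). Since every component meets some $U_x$, the collection of restrictions recovers the cycle, which gives injectivity of $\CX\to\prod_x\CU_x$. The main obstacle is making this restriction argument rigorous, because $U_x$ is not projective and the toric Chow quotient has to be interpreted via limits of cycles.
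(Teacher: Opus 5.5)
Your final paragraph is essentially the paper's proof: by Zariski's main theorem it suffices that $\CX\to\ol{V}$ contract no curve; a contracted curve would be contracted by every $\CX\to\CU_x$; and the cycles it parametrizes would then have the same components in every $U_x$, which is absurd since the $U_x$ cover $X$. The universal-property route of your first paragraphs is unnecessary, and, as you note, its key claim fails: for $\DA_4(2)$ the semistable locus at the barycenter of the weight polytope contains points with $p_I=0$ for every $I$, so it lies in no single $U_I$.

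The paper asserts in one line the restriction step you flag as the obstacle, and Proposition \ref{prop:ChowToric} does not justify it since $U_x$ is affine. Instead, for $w$ in a GIT chamber of $U_x$, the composite $\CX\to\CU_x\to(\GU_x)_w=\GX_v$ is $\pi_v$, which sends a cycle $Z$ to the image of $Z\cap X^{\ss}_v\subset U_x$. This is a single orbit because the quotient is geometric, and every orbit of maximal dimension in $Z$ arises this way for a suitable choice of $x$ and $w$.
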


\begin{proof}
Since $\CX\to \ol{V}$ is a birational map among normal varieties, it is enough to show that it is finite. If it contracted a curve $C$, then that curve would be contracted by the morphisms $\CX\to \CU_x$, for every $x\in X^H$. This would be saying that the cycles parametrized by $C$ have the same irreducible components in every $U_x$, contradicting that the $U_x$'s form a covering of $X$.
\end{proof}

The hypotheses of the above Proposition are fulfilled  when $X$ is a rational homogeneous space, that is the quotient $G/P$ of a semisimple algebraic group by a parabolic subgroup by the action of a maximal torus $H\subset P$. In fact, in this case the fixed points of the action are known to be the elements of the form $\sigma B$, where $\sigma$ is an element of the Weyl group $W=\No_G(H)/H$. The weights of the action --with respect to a linearization of an ample line bundle $L(w)$ associated to a weight $w\in \Mo(H)$-- are of the form $-\sigma(w)\in \Mo(H)$, $\sigma\in W$. We can then easily see that these values are the vertices of the polytope they generate. 

Given an element $\sigma\in W$, multiplication by (a representative in $\No_G(H)$ of) $\sigma^{-1}$ provides an isomorphism $\sigma^{-1}:U_{\sigma B}\to U_{eB}$, that descends to an isomorphism $\CU_{\sigma B}\to \CU_{eB}$, which we still denote by 
$\sigma^{-1}$. Composing this with the birational map $\CU_{eB}\dashrightarrow \CU_{\sigma B}$ induced by the intersection, we obtain a birational automorphism of $\CU_{eB}$, that we call the {\em mutation map associated to $\sigma$}; this name is meant to suggest a relation with the concept of cluster algebra, introduced by Fomin and Zelevinsky (see \cite{FZ02,Wil14,Mora26}), which we intend to explore in the future. 

We get a group homomorphism:
\[
\mu:W\to\Bir(\CU_{eB}),
\]
that contains all the necessary information to construct $\CX$ out of $\CU_{eB}$. Finally, since the variety $\CU_{eB}$ is toric, with respect to a torus $T'$ of dimension $\dim(G/B)-\dim(H)$, it is reasonable to identify $\Bir(\CU_{eB})$ with $\Bir(T')$, or with the group of birational automorphisms of some particularly simple toric compactification of $T'$ (typically a geometric quotient of $U_{eB}$). Summing up, the process of computing the Chow quotient of $X=G/P$ by the action of $H$ can be tackled in two steps:
\begin{itemize}
\item[1.] compute the combinatorial quotient $\CU_{eB}$;
\item[2.] construct $\CX$ out of $\CU_{eB}$ by using the mutation group $\mu(W)\subset\Bir(\CU_{eB})$. 
\end{itemize}

\section{Quotients of Picard number one}\label{sec:picone}

A first comment on the procedure described above is that the birational complexity of the combinatorial quotients should reflect on the intricacy of the second step. The next statement illustrates this fact on the simplest possible situation: 

\begin{proposition}\label{prop:uniquecomb}
Let $X=G/P$ be a rational homogeneous manifold, endowed with the action of a maximal torus $H\subset P\subset G$. Assume that the combinatorial quotient $\CU_x$ of the Bruhat neighborhood of an $H$-fixed point of $X$ has Picard number one. Then $\CX\simeq \CU_x$. 
\end{proposition}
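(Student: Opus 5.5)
We want to show that, under the Picard number one hypothesis, the contraction $\CX\to\CU_x$ of Proposition \ref{prop:maptocomb} is an isomorphism. The plan is to show that all combinatorial quotients $\CU_y$, $y\in X^H$, are identified with one another by the same birational map, so that the variety $V\subset\prod_y\CU_y$ is a graph, and then to apply Proposition \ref{prop:maptocomb2}.

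\textbf{Step 1: the $\CU_y$ are isomorphic.} Since $X=G/P$ is homogeneous and the fixed points form one orbit of the Weyl group, for every $y\in X^H$ there is $\sigma\in W$ with $\sigma(U_x)=U_y$. This $\sigma$ normalizes $H$, so it descends to an isomorphism $\CU_x\simeq\CU_y$. Hence every $\CU_y$ has Picard number one.

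\textbf{Step 2: the induced birational maps are isomorphisms.} By Proposition \ref{prop:maptocomb}, for each $y$ there is a contraction $\pi_y:\CX\to\CU_y$. Each $\pi_y$ is birational, and all of them restrict to the identity on the geometric quotient $\GU$ of the common open set. A $\Q$-factorial projective variety of Picard number one has no nontrivial small modifications and no nontrivial divisorial contractions onto a normal projective variety. We expect the fact that, by Corollary \ref{cor:projective}, $\CU_y$ is a projective toric variety of Picard number one (so it has a simplicial fan, hence is $\Q$-factorial) to force the following. The birational map $\phi=\pi_y\circ\pi_x^{-1}:\CU_x\dashrightarrow\CU_y$ is compatible with the contractions from $\CX$, and we aim to show it is an isomorphism.

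The key observation is the following. Let $E\subset\CX$ be an exceptional prime divisor of $\pi_x$. If $E$ were not contracted by $\pi_y$, then $\phi^{-1}$ would contract the divisor $\pi_y(E)$. But a birational contraction of $\CX$ onto a Picard number one target has relative Picard number $\rho(\CX)-1$. Comparing $\pi_x$ and $\pi_y$, the pullbacks of the ample generators of $\CU_x$ and $\CU_y$ are nef and big divisors on $\CX$ that are positive away from the respective exceptional loci. If the exceptional loci differed, the divisor $\pi_x^*A_x$ would be a pullback from $\CU_y$ only up to exceptional divisors, which contradicts Picard number one via the negativity lemma. Concretely, write $\pi_y^*A_y\equiv a\,\pi_x^*A_x+\sum c_iE_i$ with the $E_i$ $\pi_x$-exceptional. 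Applying the negativity lemma over $\CU_x$ in both directions should give $c_i=0$. Then $\pi_y^*A_y$ and $\pi_x^*A_x$ are proportional, so $\pi_x$ and $\pi_y$ are the same contraction, defined by the same ray in the nef cone. Hence $\phi$ is an isomorphism.

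\textbf{Step 3: conclusion.} By Step 2, $V\subset\prod_y\CU_y$ is the graph of isomorphisms, so $V\simeq\CU_x$ is normal and $\ol V=V\simeq\CU_x$. Proposition \ref{prop:maptocomb2} then gives $\CX\simeq\ol V\simeq\CU_x$.

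\textbf{Main obstacle.} The hard step is Step 2, specifically the sign argument in the negativity lemma. One must ensure that $\pi_y^*A_y-a\,\pi_x^*A_x$ is both $\pi_x$-nef and effective (or antieffective) in the right sense. If this is delicate, I would instead argue directly through Proposition \ref{prop:maptocomb2}. Any curve contracted by $\pi_x$ must be contracted by every $\pi_y$, because two contractions of $\CX$ onto Picard number one varieties that agree on $\GU$ induce the same ray in $N^1(\CX)$. By the proof of Proposition \ref{prop:maptocomb2}, a curve contracted by every $\pi_y$ is a point, so $\pi_x$ is finite and birational, hence an isomorphism.
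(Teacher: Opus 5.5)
Your Steps 1 and 3 are fine, but Step 2 has a genuine gap. The claim it rests on is false in the abstract, and the negativity lemma does not give what you need.

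Knowing only that $\pi_x,\pi_y$ are birational contractions of $\CX$ onto $\mathbb{Q}$-factorial projective varieties of Picard number one, both isomorphisms on $\GU$, does not force $\phi=\pi_y\circ\pi_x^{-1}$ to be an isomorphism. Take the hexagonal toric surface $S$, the blow-up of $\P^2$ at the three coordinate points. This is the $2$-dimensional permutohedral variety, i.e.\ $\CU_{I_0}$ for $\DA_4(2)$ in the paper. It has two birational toric contractions to $\P^2$, both isomorphisms on the torus, with $\pi_y^*A_y=2\,\pi_x^*A_x-E_1-E_2-E_3$. The induced map $\P^2\dashrightarrow\P^2$ is the standard Cremona transformation, which is the paper's mutation $r_1$.

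So the negativity lemma only gives $c_i\le 0$, and $d_j\le 0$ in the other direction for the other set of exceptional divisors. It does not give $c_i=0$. Nor is it contradictory that $\phi^{-1}$ contracts the divisor $\pi_y(E)$: $\phi^{-1}$ is only a birational map. The fact that a Picard number one variety has no small modifications or divisorial contractions says nothing about birational maps such as Cremona. Your fallback, that two such contractions agreeing on $\GU$ define the same ray, fails on the same example.

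The missing ingredient is something that ties different Bruhat neighborhoods together, and this is exactly what the paper's proof supplies.

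\begin{itemize}
\item Since $\CU_x$ has Picard number one, its toric birational contraction to any geometric quotient of $U_x$ (Remark \ref{rem:contract}) is an isomorphism. A nonzero nef class on a Picard number one variety is ample, so the map is finite and birational onto a normal variety. By the Weyl group action, the same holds for every $\CU_y$.
\item For fixed points $wP$ and $w'P=r_iwP$, the weights $\mu_L(wP)$ and $\mu_L(w'P)$ are joined by an edge of the weight polytope $\cP$ (their difference is a multiple of $w(\alpha_i)$).
\item A rational linearization $v$ close to that edge gives a GIT quotient $\GX_v$ of $X$ that is simultaneously a geometric quotient of $U_{wP}$ and of $U_{w'P}$.
\item Hence $\CU_{wP}\simeq\GX_v\simeq\CU_{w'P}$, compatibly with the birational map between them, so that map is an isomorphism.
\item Simple reflections connect all fixed points, so $V$ is a graph of isomorphisms, and your Step 3 then concludes.
\end{itemize}

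You need this common geometric quotient, or some other use of the geometry of $X$. Picard number one of the targets alone cannot do the job.
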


\begin{proof}
The hypothesis implies that every geometric quotient of $U_x$ is isomorphic to $\CU_x$ and, in particular, the induced maps among the geometric quotients of $U_x$ are isomorphisms. Using the action of the Weyl group of $G$, which is transitive on the set of fixed points, the same holds for the geometric quotients of any other Bruhat neighborhood of $X$. 

In order to conclude, we need to prove that also the induced maps among geometric quotients of different Bruhat neighborhoods $U_{wP}, U_{w'P}$ are isomorphisms. Without loss of generality, we may assume that $w'=r_iw$, for some $i$. 
 We consider an ample line bundle $L$ on $X$, and the corresponding weight polytope $\cP$ of $L$. We note that, by AMvsFM formula (cf. \cite[Lemma~2.2]{RW}) $\mu_L(w'P)-\mu_L(wP)$ is a multiple of $w(\alpha_i)$. Taking a rational point in $\cP$ close enough to the edge of $\cP$ joining $\mu_L(w'P)$ and $\mu_L(wP)$, we obtain a linearization of the $H$-action on a large enough multiple of $L$, such that the corresponding GIT quotient is a geometric quotient of both $U_{wP}$ and $U_{w'P}$. It then follows that the induced maps among geometric quotients of $U_{wP}, U_{w'P}$ are isomorphisms.
\end{proof}

\subsection{The projectivized tangent bundle of $\P^n$}\label{ssec:projtang}

Let us consider here the case of the rational homogeneous variety $X=\DA_n(1,n)=\P(T_{\P^n})$ parametrizing flags of points and hyperplanes in $\P^n$, endowed with the action of a maximal torus in $\SL(n+1)$. The weights of this action on the tangent space at the class $x$ of the identity are:
\[
\alpha_1,\,\alpha_1+\alpha_2,\, \dots,\,\alpha_1+\dots+\alpha_n,\,\alpha_2+\dots+\alpha_n,\,\alpha_n. 
\]
In particular the weight matrix at $T_{X,x}$ is:
\[
\begin{pmatrix}
1&1&\dots&1&0&\dots&0\\
0&1&\dots&1&1&\dots&0\\
\vdots&\vdots&\ddots&\vdots&\vdots&\ddots&\vdots\\
0&0&\dots&1&1&\dots&1
\end{pmatrix}
\]
which has as transposed Gale dual:
\[
\begin{pNiceArray}{c|c|c}
I_{n-1}& (-1)_{(n-1)\times 1}& I_{n-1}
\end{pNiceArray}. 
\]
It then follows that $\CU_x$ is isomorphic to $\P^{n-1}$, so applying Proposition \ref{prop:uniquecomb} we get: 

\begin{proposition}\label{prop:projtang}
The normalized Chow quotient of $\P(T_{\P^n})$ by the action of a maximal torus in $\SL(n+1)$ is $\P^{n-1}$. 
\end{proposition}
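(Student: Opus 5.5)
The plan is to combine Proposition \ref{prop:uniquecomb} with an explicit computation of $\CU_x$ from the Gale dual written above. First I check that $X=\P(T_{\P^n})$ with the maximal torus $H$ of $\SL(n+1)$ satisfies the hypotheses of Proposition \ref{prop:uniquecomb}. $X$ is rational homogeneous of type $\DA_n(1,n)$, and $H\subset P$, so the action is fully definite at every fixed point. The weights at $x$ listed above are positive roots of the form $\alpha_1+\dots+\alpha_k$ or $\alpha_k+\dots+\alpha_n$. They have nonnegative coordinates in the basis of simple roots, which is a $\Z$-basis of $\Mo(H)$, and all of them are nonzero. It therefore remains to show that $\CU_x\simeq\P^{n-1}$; this has Picard number one, so the proposition then gives $\CX\simeq\P^{n-1}$.

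To compute $\CU_x$ I follow Construction \ref{cons:combiquot}. There are $2n-1$ coordinates and $n$ weights, and the weight matrix has rank $n$, so $\No(T/H)\simeq\Z^{n-1}$. The transposed Gale dual $q$ sends the standard basis vectors to
\[
e_1,\dots,e_{n-1},\quad -(e_1+\dots+e_{n-1}),\quad e_1,\dots,e_{n-1}.
\]
I would verify this by checking that $q\circ\pi^{t}=0$ and that $q$ is surjective on lattices. Then $q_*(\Sigma(\sigma_0))$ consists of the common refinement of the images of the faces of the orthant. These images are the cones generated by subsets of $\{e_1,\dots,e_{n-1},-\sum e_i\}$, with repetitions among the generators allowed.

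The key observation is that the image vectors are exactly the $n$ rays of the fan of $\P^{n-1}$. Since repeated vectors generate no new cones, every image cone is generated by a subset of these $n$ rays. Such a cone is either a cone of the fan of $\P^{n-1}$, namely one generated by a proper subset, or all of $\No(T/H)_\R$, when all $n$ rays are present. Intersecting cones of a complete simplicial fan with one another, and with the whole space, gives back cones of the same fan. So the quotient fan equals the fan of $\P^{n-1}$, and $\CU_x\simeq\P^{n-1}$.

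The step I expect to need the most care is the definition of the quotient fan. For each cone $\sigma$ of the quotient fan, one intersects the images $q(\tau)$ of all faces $\tau$ of $\sigma_0$ whose image contains $\sigma$, or meets its relative interior. I must make sure that full-space images do not cause degeneracies. Here they do not, because the intersection with whole-space cones is harmless. Finally, I would double-check the Gale dual itself, since the displayed block form depends on the column ordering: the middle column $\alpha_1+\dots+\alpha_n$ must map to $-\sum e_i$.
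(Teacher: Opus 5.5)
Your proposal is correct and follows the paper's argument: the same transposed Gale dual $(I_{n-1}\,|\,-(1)_{(n-1)\times 1}\,|\,I_{n-1})$, the observation that the quotient fan is the fan of $\P^{n-1}$ (which you justify explicitly, whereas the paper only says ``it then follows''), and then Proposition \ref{prop:uniquecomb}. One harmless imprecision: the simple roots are a $\Z$-basis of the character lattice of the torus that acts effectively (the image of $H$ in $\mathrm{PGL}(n+1)$), not of $\Mo(H)$ for $H\subset\SL(n+1)$, which is the weight lattice; the paper implicitly makes the same identification.
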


\subsection{Quadrics}\label{ssec:quad}

We will now consider the case of smooth quadrics, that will be denoted  by $\DB_n(1)$ (resp. $\DD_n(1)$), in the case of odd dimension $2n-1$, and $\DD_n(1)$ in the case of even dimension $2n-2$. We consider homogeneous coordinates $(x_0:\dots:x_{2n})$ (resp. $(x_1:\dots:x_{2n})$) so that the quadric is defined by the equation $x_0^2+x_1x_{n+1}+\dots+x_nx_{2n}=0$ (resp. $x_1x_{n+1}+\dots+x_nx_{2n}=0$). 

We consider the maximal torus $H$ in the  group $\SO(2n+1)$ (resp. $\SO(2n)$) acting on the above coordinates with weights $0,e_1,\dots e_n,-e_1,\dots,-e_n\in \Mo(H)\simeq\Z^n$ (resp. $e_1,\dots e_n,-e_1,\dots,-e_n$). Here the $\Z$-basis $\{e_1,\dots,e_n\}\subset \Mo(H)$ corresponds to the canonical $\Z$-basis of $\Z^n$. In both cases, we have $2n$ isolated $H$-fixed points, which are the coordinate points
$P_1, \dots, P_{2n}$.  We will prove the following:

\begin{proposition}\label{prop:combquotquad}
The Chow quotient of a smooth quadric by the action of a maximal torus in the corresponding special orthogonal group is a projective space.
\end{proposition}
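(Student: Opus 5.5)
The plan is to apply Proposition \ref{prop:uniquecomb}. A smooth quadric $Q$ is a rational homogeneous manifold, $\DB_n(1)$ or $\DD_n(1)$, with $H$ a maximal torus of the orthogonal group. So it is enough to compute the combinatorial quotient $\CU_x$ of the Bruhat neighborhood of one fixed point and check that it is a projective space, in particular that it has Picard number one. I take $x=P_1$, the point with $x_1\neq 0$. Then $U_x=\{x_1\neq 0\}\cap Q$, and we may solve for $x_{n+1}$, giving affine coordinates $x_0,x_2,\dots,x_n,x_{n+2},\dots,x_{2n}$ (without $x_0$ in the even case). The weights of $H$ on these coordinates, relative to the weight $e_1$ of $x_1$, are
\[
-e_1\ (\text{for } x_0),\qquad e_j-e_1,\qquad -e_j-e_1,\qquad j=2,\dots,n.
\]

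The first step is to make the action fully definite by a change of basis of $\Mo(H)$. I take $f_j=-e_1+e_j$ for $j=2,\dots,n$, and in the odd case also $f_1=-e_1$, or $f_1=-e_1-e_n$ suitably adapted in the even case. The aim is to write $-e_1-e_j=f_j+2f_1$ or similar, so that all coefficients are nonnegative. In the odd case the weights become $f_1$, $f_j$, and $f_j+2f_1$ with $-e_j-e_1=2f_1-(e_j-e_1)$... I will redo this more carefully. The natural choice is $a_j:=e_j-e_1$ and $b_j:=-e_j-e_1$, and these satisfy $a_j+b_j=-2e_1$ for all $j$. So the weight matrix has the $2n-2$ (or $2n-1$) columns $a_2,\dots,a_n,b_2,\dots,b_n$ (and $c=-e_1$), subject to the relations $a_j+b_j=a_k+b_k$ $(=2c)$.

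The second step is the Gale dual. The kernel of $\pi$ is spanned by the vectors $\varepsilon_{a_j}+\varepsilon_{b_j}-\varepsilon_{a_k}-\varepsilon_{b_k}$, together with $\varepsilon_{a_j}+\varepsilon_{b_j}-2\varepsilon_c$ in the odd case. The transposed Gale dual therefore sends $\varepsilon_{a_j},\varepsilon_{b_j}$ to the same vector $u_j$ for each $j$, and, after choosing coordinates, sends $\varepsilon_c$ to $-2$ times a combination. In the even case the images are $u_2,\dots,u_n$ with $\sum u_j=0$ spanning $\R^{n-2}$, which is the complete fan of $\P^{n-2}$. Each cone of $\sigma_0$ projects onto cones generated by subsets of the $u_j$, and all intersections of these are again such cones. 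In the odd case the dual lattice has rank $n-1$ and the images are $u_2,\dots,u_n$ and $u_c=-\tfrac12\sum u_j$. After rescaling to primitive generators these span the fan of $\P^{n-1}$. I will compute the quotient fan $q_*(\Sigma(\sigma_0))$ directly: intersections of projected cones generated by subsets of rays of a simplex fan stay within the simplex fan. So $\CU_x$ is $\P^{n-2}$, respectively $\P^{n-1}$, up to the lattice check.

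The main obstacle is the lattice and fan check in the last step. The intersections of projected cones could a priori refine the simplex fan, and the lattice index could produce a weighted projective space or a finite quotient instead of a projective space. I will verify this explicitly. First, check that every image cone $q(\text{face})$ is a union of maximal cones of the simplex fan, which holds because each projected face is generated by a subset of the rays. Second, check that the $u_j$ (and the primitive generator in the direction $-\sum u_j$) form a lattice basis-plus-sum configuration in $\No(T/H)$; the factor $2$ coming from $x_0$ needs care here. Once $\CU_x\simeq\P^m$, it has Picard number one, and Proposition \ref{prop:uniquecomb} gives $\CX\simeq\CU_x$, a projective space.
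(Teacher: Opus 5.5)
Your route is the paper's: you apply Proposition~\ref{prop:uniquecomb}, compute the weights at $P_1$, and read $\CU_{P_1}$ off the transposed Gale dual. That dual is $\bigl((-2)_{(n-1)\times 1}\,|\,I_{n-1}\,|\,I_{n-1}\bigr)$ in the odd case and the matrix (\ref{eq:projeven}) in the even case. A few points need fixing.

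\textbf{The image of $\varepsilon_c$.} In the odd case $q(\varepsilon_c)=-2\sum_j u_j$, as you correctly say in your second step, not $-\tfrac12\sum_j u_j$. The $\tfrac12$ belongs to the relation $c=\tfrac12(a_j+b_j)$ among the weights, which lives on the other side of the duality.

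\textbf{The lattice check.} With the correct value, the check you postpone takes one line. $q$ maps $\No(T)$ onto $\No(T/H)$ and $q(\varepsilon_{a_j})=u_j$, so the $u_j$ generate the lattice. In the odd case they are therefore a $\Z$-basis, $-\sum_j u_j$ is the primitive generator of the ray through $q(\varepsilon_c)$, and the factor $2$ never affects the lattice. So no weighted projective space or finite quotient can appear. In the even case the only relation among the $u_j$ is $\sum_j u_j=0$, so any $n-2$ of them form a basis.

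\textbf{The fan.} The accurate statement is not that projected faces are ``unions of maximal cones'', which fails for lower-dimensional faces. Rather, each projected face is either the whole space or a cone of the simplex fan, and each cone of the simplex fan is itself a projected face. Hence the quotient fan is exactly that fan.

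\textbf{Your first step.} It is not needed, since Proposition~\ref{prop:uniquecomb} applies directly to any $G/P$ with a maximal torus. The basis you try also fails, because $-e_j-e_1=2f_1-f_j$. If you want an explicit basis, $g_1=-(e_1+\dots+e_n)$ and $g_j=e_j$ for $j\geq 2$ makes all weights nonnegative in both the odd and even cases.
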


\begin{proof}
We will show that the combinatorial quotient of the tangent space of the quadric at a fixed point $x$ is a projective space, which allows us to conclude the proof by Proposition \ref{prop:uniquecomb}.

We will show the statement in the case of the odd dimensional quadric $\DB_{n}(1)$ and explain how to adapt the argument to the even dimensional case. 
Without loss of generality we may assume that $x=P_1=(0:1:0:\dots:0)$. Then the weights of the $H$-action on $T_{\DB_n(1),x}$ are:
\begin{equation}\label{eq:weightsodd}
-e_1,e_2-e_1,\dots,e_n-e_1,-e_2-e_1,\dots,-e_n-e_1\in\Mo(H).
\end{equation}
Then the combinatorial quotient of $T_{\DB_n(1),x}$ is the toric variety defined by the quotient fan of the positive orthant $\sigma_0$ in $\R^{2n-1}$ into $\R^{n-1}$ given by the linear surjection $q$ associated to the transposed Gale dual of the matrix whose columns are the coordinates of the above list of weights. An easy computation shows that $q$ may be given by the $(n-1)\times(2n-1)$ matrix:
\begin{equation}\label{eq:projodd}
\begin{pNiceArray}{c|c|c}
(-2)_{(n-1)\times 1}& I_{n-1}& I_{n-1}
\end{pNiceArray}.
\end{equation}
In particular, one sees that the combinatorial quotient of $T_{\DB_n(1),x}$ is $\P^{n-1}$.

In a similar way, the linear map defining the quotient fan in the case of $\DD_n(1)$ is given by the $(n-2)\times (2n-2)$ matrix:
\begin{equation}\label{eq:projeven}
\begin{pmatrix}
-1& 1&0&\ldots&0&-1& 1&0&\ldots&0\\
 0&-1&1&\ldots&0& 0&-1&1&\ldots&0\\
\vdots&\vdots&\ddots&\ddots&\vdots&\vdots&\vdots&\ddots&\ddots&\vdots\\
0&0&\ldots&-1&1&0&0&\ldots&-1&1
\end{pmatrix}.
\end{equation}
The rays of the quotient fan are then generated by the first $(n-1)$ columns of the matrix; since their sum is zero, then the combinatorial quotient is $\P^{n-2}$. 
\end{proof}

\subsection{The boundary divisor of the Chow quotient of a quadric}\label{ssec:boundaryquad}

Besides the determination of the Chow quotient, a finer analysis of the combinatorial quotients and of the mutation maps considered in our two-step procedure, allows us to describe the boundary divisors of $\CX$, parametrizing reducible $H$-invariant cycles in $X$. 

We will consider the case of a quadric of dimension $2n-1$ in $\P^{2n}$; the case of the quadrics of even dimension is analogous. 

Let us start by considering the point $P_1=(0:1:0:\dots:0)$ and the combinatorial quotient $\CU_{P_1}$. Looking at the weights of the action (\ref{eq:weightsodd}) we immediately realize that we have $2^{n-1}$ GIT chambers of maximal dimension; each one is the cone generated by a set of $n$ weights:
\[-e_1,\pm e_2-e_1,\dots,\pm e_n-e_1,\]
for a choice of $(n-1)$ signs. Each of these chambers provides a geometric quotient of $T_{\DB_n(1),P_1}$. On the other hand we may identify the sets of stable points for each of these geometric quotients. We denote by $\rho_1,\rho_2^+,\dots,\rho_n^+,\rho_2^-,\dots,\rho_n^-$ the rays of the positive orthant $\sigma_0\subset\R^{2n-1}$ corresponding to the columns of the projection matrix (\ref{eq:projodd}) and we note that the rays $\rho_i^+,\rho_i^-$  project via $\pi$ to the ray generated by $(i-1)$-th coordinate vector of $\R^{n-1}$.   
The sets of stable points are of the form 
\[T_{\DB_n(1),P_1}\setminus \big(\ol{O(\rho_2^\pm)}\cup \dots \cup \ol{O(\rho_n^\pm)}\big).\] We have one of these sets of stable points for each choice of $n-1$ signs in the formula above.

\begin{figure}[h!!]
\begin{tikzpicture}[scale=1.2, font=\footnotesize]
\draw[thick] (-1,0) -- (0,1) -- (1,0) -- (0,-1) -- (-1,0);
\draw[thin] (-1,0) -- (1,0);
\draw[thin] (0,-1) -- (0,1);

\filldraw[black] (0,0) circle (0.03) node[above right] {$-e_1$};
\filldraw[black] (1,0) circle (0.03) node[right] {$e_2-e_1$};
\filldraw[black] (-1,0) circle (0.03) node[left] {$-e_2-e_1$};
\filldraw[black] (0,-1) circle (0.03) node[below] {$-e_3-e_1$};
\filldraw[black] (0,1) circle (0.03) node[above] {$e_3-e_1$};
\end{tikzpicture}
\quad
\begin{tikzpicture}[scale=1.2, font=\footnotesize]
\draw[thick] (-1,0) -- (0,1) -- (1,0) -- (0,-1) -- (-1,0);
\draw[thin] (-1,0) -- (1,0);
\draw[thin] (0,-1) -- (0,1);

\filldraw[black] (1,0) circle (0.03) node[right] {$e_2-e_1$};
\filldraw[black] (-1,0) circle (0.03) node[left] {$-e_2-e_1$};
\filldraw[black] (0,-1) circle (0.03) node[below] {$-e_3-e_1$};
\filldraw[black] (0,1) circle (0.03) node[above] {$e_3-e_1$};
\end{tikzpicture}

\caption{The GIT chamber decompositions of $T_{\DB_3(1),P_1}$, and $T_{\DD_3(1),P_1}$.}\label{fig:GITchamber}
\end{figure}

This can be interpreted as follows. We consider homogeneous coordinates $(y_1:y_2:\dots:y_n)$ in the combinatorial quotient $\CU_{P_1}\simeq\P^{n-1}$ in correspondence with the first  $n$  columns of the matrix (\ref{eq:projodd}). A general element in a hyperplane $y_i=0$, $i\geq 2$, corresponds to a reducible $H$-invariant cycle in $\C^{2n-1}$ containing two orbits of maximal dimension: one contained in $\ol{O(\rho_i^+)}$, and one contained in $\ol{O(\rho_i^-)}$.

Now we extend this description to the Chow quotient of $X$, and identify the boundary divisors in $\CX$, parametrizing reducible $H$-invariant cycles. Since each of them comes from a boundary divisor in a combinatorial quotient $\CU_{P_i}$, that we have described above, we may identify $\CX$ with $\CU_{P_1}$ and study the pullback to $\CU_{P_1}$ of the boundary divisors of any other $\CU_{P_i}$ via the natural map $\CU_{P_1}\to \CU_{P_i}$ (induced by the intersection of $U_{P_1}$ and $U_{P_i}$). 
A straightforward, somewhat tedious, computation shows that, in the case in which $i\in\{2,\dots,n\}$ the above map can be written as:
\[
(y_1:y_2:\dots:y_n)\in \P^{n-1}\mapsto \left(y_1:-\sum_{j=1}^ny_j:y_2:\dots:\widehat{y_i}:\dots:y_n\right)\in\P^{n-1}.
\]
Moreover, one can also verify that the natural map $\CU_{P_i}\to\CU_{P_{n+i}}$ is the identity for every $i$. It then follows that the boundary divisor of $\CX$, parametrizing reducible $H$-invariant cycles, consists of $n$ hyperplanes in $\P^{n-1}$:
\[(y_1+\dots+y_n)y_2\dots y_n=0.
\]

The case of $X=\DD_n(1)$ is essentially analogous. The Chow quotient $\CX$ is a projective space $\P^{n-2}$, and its boundary divisor consists of $n$ hyperplanes. Taking homogeneous coordinates $(y_2:\dots:y_n)$ in these spaces, the boundary divisor is given by:
\[(y_2+\dots+y_{n})y_2\dots y_{n}=0.
\]

\section{On Chow and combinatorial quotients of Grassmannians}\label{sec:grass}

The Chow quotient of the Grassmannians $\DA_n(k+1)$ of $k$-dimensional subspaces in $\P^n$ by the action of a maximal torus $H\subset\SL(n+1)$ has been described by Kapranov (\cite{Kapranov}) in the case $k=1$, while it is unknown for $k>1$. Let us present here some comments on how  our roadmap towards its computation would work. 

The affine covering  of $X:=\DA_n(k+1)=\bigcup_{I}U_{I}$ in Bruhat neighborhoods of fixed points  can be described in terms of Pl\"ucker coordinates with respect to a chosen set of homogeneous coordinates in $\P^n$. For every subset $I=\{i_0,\dots,i_k\}\subset\{0,\dots,n\}$ we denote by $U_I\subset \DA_n(k+1)$ the open subset where the Pl\"ucker coordinate $p_I$ is different from zero. Each $U_I$ is $H$-equivariantly isomorphic to a $(k+1)(n-k)$-dimensional affine space, and  $w(U_I)=U_{w(I)}$, for every $I\subset \{0,\dots,n\}$, $w\in W$. 

\subsection{Combinatorial quotients of Grassmannians}\label{ssec:grass1}

We will consider here the problem of computing the combinatorial quotient of $U_{I_0}$, with $I_0=\{0,\dots,k\}$. Besides some low-dimensional cases, obtaining a geometric description of the combinatorial quotient of $U_{I_0}$ is very challenging; let us here describe only some of its geometric features. Note that we have a system of affine coordinates in $U_{I_0}$:
\[
\left(u_\ell^j\right)_{\substack{j=0,\dots,k\\ \ell=k+1,\dots,n}}\in U_{I_0},
\]
completely determined by the $H$-action:
\[
h\left(u_\ell^j\right)=\left(h^{\sum_{t=j+1}^{\ell}\alpha_t}u_\ell^j\right).
\]
Alternatively, one may describe these coordinates as conveniently chosen quotients of Pl\"ucker coordinates.

As usual, we will write the weights of the $H$-action as columns of a matrix; in order to write this matrix properly, we will use the following notation:
\[
\begin{array}{l}
t_j=\alpha_1+\dots+\alpha_j\in \Mo(H)\mbox{: $j=1,\dots, n$,}\\
(0)_{m\times m'}\mbox{: $m\times m'$ zero matrix,}\\
(1)_{m\times m'}\mbox{: $m\times m'$ matrix with entries equal to $1$,}\\
\Delta^m:=\left(I_{m}\,\,|\,\,-(1)_{m\times 1}\right)\in M_{m\times(m+1)},\\
\otimes:\mbox{Kronecker product of matrices.}
\end{array}
\]
With this notation, the weight matrix of the $H$-action on $U_{I_0}$, in coordinates with respect to $\{t_1,\dots,t_n\}$, can be written as:
\begin{equation}\label{eq:weightgrass}
\begin{pNiceArray}{c|cc}
(0)_{k\times (n-k)}&-I_k\otimes (1)_{1\times (n-k)}&\\\hline
I_{n-k}&\Block{1-2}{(1)_{1\times k}\otimes I_{n-k}}&
\end{pNiceArray}.
\end{equation}
One may then check that a transposed Gale dual of this matrix is:
\begin{equation}\label{eq:Galegrass}
\Delta^k\otimes \Delta^{n-k-1}.
\end{equation}
In the case $n=5$, $k=2$, the weight matrix and its transposed Gale dual are:
\[\setlength{\arraycolsep}{4pt}
\begin{pNiceArray}{ccc|cccccc}
0 & 0 & 0 & -1 & -1 & -1 & 0 & 0 & 0 \\
0 & 0 & 0 & 0 & 0 & 0 & -1 & -1 & -1 \\
\hline
1 & 0 & 0 & 1 & 0 & 0 & 1 & 0 & 0 \\
0 & 1 & 0 & 0 & 1 & 0 & 0 & 1 & 0 \\
0 & 0 & 1 & 0 & 0 & 1 & 0 & 0 & 1
\end{pNiceArray},\quad
\begin{pNiceArray}{ccc|ccc|ccc}
1 & 0 & 1  & 0 & 0 & 0  & -1 &  0 & 1 \\
0 & 1 & -1 & 0 & 0 & 0  &  0 & -1 & 1 \\ \hline
0 & 0 & 0  & 1 & 0 & -1 & -1 &  0 & 1 \\
0 & 0 & 0  & 0 & 1 & -1 &  0 & -1 & 1 
\end{pNiceArray}
\]

Considering now the action of the subtorus $H'\subset H$ of rank $n-k$ corresponding to the submatrix  of the weight matrix consisting of the last $(n-k)$ rows, one may compute the corresponding quotient fan by looking at the transposed Gale dual of that submatrix, which is:
\[
\Delta^k\otimes I_{n-k}.
\]
For instance, in the case $n=5$, $k=2$, this reads as: 
\[
\begin{pNiceArray}{ccc|ccc|ccc}
1 & 0 & 0 & 0 & 0 & 0 & -1 & 0 & 0 \\
0 & 1 & 0 & 0 & 0 & 0 & 0 & -1 & 0 \\
0 & 0 & 1 & 0 & 0 & 0 & 0 & 0 & -1 \\\hline
0 & 0 & 0 & 1 & 0 & 0 & -1 & 0 & 0 \\
0 & 0 & 0 & 0 & 1 & 0 & 0 & -1 & 0 \\
0 & 0 & 0 & 0 & 0 & 1 & 0 & 0 & -1
\end{pNiceArray}.
\]
From this it follows that the combinatorial quotient $U_{I_0}$ by $H'$ is $(\P^k)^{n-k}$. By looking now at the first $k$ rows of the weight matrix of the $H$-action, we may conclude that the $H$-action on $U_{I_0}$ induces a $(\C^*)^k$-action on $(\P^k)^{n-k}$ that is the natural action on each of the factors. We will simply call it the {\em diagonal action of $(\C^*)^k$ on $(\P^k)^{n-k}$}. 
Summing up, we may write: 

\begin{proposition}\label{prop:niceisom}
The combinatorial quotient of a Bruhat neighborhood in the Grassmannian $\DA_n(k+1)$ is isomorphic to the normalized Chow quotient of $(\P^k)^{n-k}$ by the diagonal action of $(\C^*)^k$.
\end{proposition}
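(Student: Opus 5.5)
The approach is to realize the combinatorial quotient of $U_{I_0}$ by $H$ as a two-stage quotient of fans: first by $H'$, then by $H/H'\simeq(\C^*)^k$. Everything then reduces to Proposition \ref{prop:ChowToric} applied to the projective toric variety $(\P^k)^{n-k}$.

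\textbf{Step 1: the quotient by $H$ factors through $N(T/H')$.} Let $T\simeq(\C^*)^{(k+1)(n-k)}$ be the open torus of $U_{I_0}$ and $H'\subset H\subset T$. The projection $q:\No(T)\to\No(T/H)$ factors as
\[
\No(T)\xrightarrow{q'}\No(T/H')\xrightarrow{q''}\No(T/H).
\]
In matrix terms, the Gale dual (\ref{eq:Galegrass}) is the Gale dual $\Delta^k\otimes I_{n-k}$ of the $H'$-submatrix, followed by a map $q''$. The map $q''$ is itself the transposed Gale dual of the $(\C^*)^k$-weights on the rays of $(\P^k)^{n-k}$; concretely, $\Delta^k\otimes\Delta^{n-k-1}=(I_k\otimes\Delta^{n-k-1})(\Delta^k\otimes I_{n-k})$ up to ordering conventions.

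\textbf{Step 2: the two-stage quotient fan equals the one-stage one.} We must show
\[
q_*(\Sigma(\sigma_0))=q''_*\bigl(q'_*(\Sigma(\sigma_0))\bigr),
\]
where $q'_*(\Sigma(\sigma_0))$ is the fan of $(\P^k)^{n-k}$ (computed in the text). I expect this to be the main obstacle. A cone of $q_*(\Sigma(\sigma_0))$ is an intersection $\bigcap_\tau q(\tau)$ over faces $\tau$ of $\sigma_0$ containing a given generic point. I would first check that each $q'(\tau)$ is a union of cones of the product fan $(\P^k)^{n-k}$. Then I would apply $q''$ to both descriptions, using that intersections of images of unions refine consistently. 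The cleanest route is to note that both fans are the coarsest common refinement of images of cones, and that any cone $q'(\tau)$ is a union of cones of $q'_*\Sigma$, so the two collections of images generate the same common refinement.

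Alternatively, one could avoid the fan combinatorics by using limit quotients. By Corollary \ref{cor:projective}, $\CU_{I_0}$ is the inverse limit of the GIT quotients of $U_{I_0}$ by $H$. Each such GIT quotient can be computed in stages, first by $H'$ and then by $(\C^*)^k$, since GIT of tori is compatible with quotienting by subtori on semistable loci when the characters are chosen compatibly. The GIT quotients of $U_{I_0}$ by $H'$ are all dominated by $(\P^k)^{n-k}$. However, it is cleaner to stay with fans, where each $\P^k$ factor appears via the $H'$-quotient with $\C^{k+1}\setminus 0$ structure.

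\textbf{Step 3: conclude with normality.} Since $(\P^k)^{n-k}$ is a normal projective toric variety and the $(\C^*)^k$-action is by a subtorus of its torus, namely the diagonal action identified from the first $k$ rows of (\ref{eq:weightgrass}), Proposition \ref{prop:ChowToric} identifies its normalized Chow quotient with the combinatorial quotient $q''_*$. Combining this with Step 2 gives the statement.
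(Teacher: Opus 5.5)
Your route is the paper's: quotient first by $H'$ to get $(\P^k)^{n-k}$, use $\Delta^k\otimes\Delta^{n-k-1}=(I_k\otimes\Delta^{n-k-1})(\Delta^k\otimes I_{n-k})$ to see that the residual torus acts diagonally, and finish with Proposition~\ref{prop:ChowToric}. The paper takes the equality $q_*(\Sigma(\sigma_0))=q''_*(q'_*(\Sigma(\sigma_0)))$ for granted, so you are right to isolate it. However, your argument for it proves only one inclusion.

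Write $\Sigma'=q'_*(\Sigma(\sigma_0))$. Since every $q'(\tau)$ is a union of cones of $\Sigma'$, every $q(\tau)$ is a union of cones $q''(C)$ with $C\in\Sigma'$. This shows that $q''_*\Sigma'$ refines $q_*(\Sigma(\sigma_0))$. The reverse inclusion needs every $q''(C)$ to be a union of cones of $q_*(\Sigma(\sigma_0))$, and that does not follow: for $C=\bigcap_i q'(\tau_i)$ you only have $q''(C)\subseteq\bigcap_i q(\tau_i)$. In general, combinatorial quotients taken in two stages can be strictly finer than the one-step quotient.

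For example, let $q'$ map the rays of $\sigma_0\subset\R^5$ to $(1,0,1),(-1,0,1),(0,1,1),(0,-1,1),(0,0,-1)$, and let $q''(x,y,z)=(x+2z,y+3z)$. Both maps are surjective on lattices, so they come from subtori $H'\subset H$. Then $\Sigma'$ has the extra ray $(0,0,1)$. Its chambers $\mathrm{cone}((1,0,1),(0,-1,1),(0,0,1))$ and $\mathrm{cone}((-1,0,1),(0,1,1),(0,0,1))$ map onto $\mathrm{cone}((1,1),(2,3))$ and $\mathrm{cone}((2,3),(1,3))$, so $(2,3)$ spans a ray of $q''_*\Sigma'$. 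On the other hand, $q_*(\Sigma(\sigma_0))$ is the chamber complex of $\{(3,3),(1,3),(2,4),(2,2),(-2,-3)\}$, and every ray of it is spanned by one of these vectors. So $(2,3)$ is not a ray there.

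The missing inclusion does hold here, for a reason specific to $H'$ that you should state. The rays of the fan of $(\P^k)^{n-k}$ are exactly the columns $q'(e_i)$ of $\Delta^k\otimes I_{n-k}$. Its cones are the cones spanned by those sets of columns that contain, for each of the $n-k$ factors, at most $k$ of that factor's $k+1$ rays. Hence every $C\in\Sigma'$ equals $q'(\tau)$ for some face $\tau$ of $\sigma_0$. Then $q''(C)=q(\tau)$ already belongs to the family whose common refinement is $q_*(\Sigma(\sigma_0))$, which shows that $q_*(\Sigma(\sigma_0))$ refines $q''_*\Sigma'$.

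Geometrically, the $H'$-quotient $\prod_\ell(\C^{k+1}\setminus\{0\})\to(\P^k)^{n-k}$ is geometric, so it creates no new rays. With this observation added, Step 2 is complete, and your proof is the paper's argument with its implicit step justified.
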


\begin{remark}\label{rem:niceisom}
Note that, in general the diagonal action of $(\C^*)^k$ on $(\P^k)^{n-k}$ does not satisfy the hypotheses of Theorem \ref{thm:main}.
\end{remark}

As a by-product of the above discussion, we may write the following statement, probably known to the experts, that we have been unable to find in the literature:
\begin{corollary}\label{cor:niceisom}
The normalized Chow quotients of $(\P^k)^{n-k}$ by the diagonal action of $(\C^*)^k$ and of $(\P^{n-k-1})^{k+1}$ by the diagonal action of $(\C^*)^{n-k-1}$ are isomorphic. In particular, the normalized Chow quotient of $\P^m\times\P^m$ by the diagonal action of $(\C^*)^m$ is the permutohedral variety.
\end{corollary}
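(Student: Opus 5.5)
The plan is to use the symmetry of the Gale dual \eqref{eq:Galegrass} under the Kronecker product. The combinatorial quotient of $U_{I_0}\subset\DA_n(k+1)$ by $H$ is the toric variety of the quotient fan $q_*(\Sigma(\sigma_0))$, where $q$ is given by $\Delta^k\otimes\Delta^{n-k-1}$. Proposition \ref{prop:niceisom} identifies it with the normalized Chow quotient of $(\P^k)^{n-k}$ by the diagonal $(\C^*)^k$. The idea is that this identification comes from a two-stage quotient: first divide by the subtorus $H'$ corresponding to one tensor factor, then by the remaining torus.

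We repeat the argument of Proposition \ref{prop:niceisom}, interchanging the roles of the two factors. Since the Kronecker product satisfies $A\otimes B=P(B\otimes A)Q$ with $P,Q$ permutation matrices, we have
\[
\Delta^k\otimes\Delta^{n-k-1}\simeq \Delta^{n-k-1}\otimes\Delta^{k}
\]
up to a permutation of the coordinates of $\sigma_0$ (which preserves the orthant) and a change of basis in the target lattice. Hence the quotient fans coincide up to a lattice isomorphism.

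Now take the subtorus $H''\subset H$ whose Gale dual is $\Delta^{n-k-1}\otimes I_{k+1}$, i.e.\ $I_{k+1}\otimes\Delta^{n-k-1}$ after the permutation. In the weight matrix \eqref{eq:weightgrass} this corresponds to fixing the upper index $j$ and taking $\ell$ as the varying coordinate. The combinatorial quotient by $H''$ is then $(\P^{n-k-1})^{k+1}$: it is a product of $k+1$ fans of $\P^{n-k-1}$, because $\Delta^{n-k-1}$ is the ray matrix of $\P^{n-k-1}$ and the quotient fan of an orthant under a block-diagonal map is the product of the quotient fans. The residual torus $H/H''$ has rank $n-k-1$ and acts diagonally on each factor.

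The key structural step is that quotienting in stages computes the full combinatorial quotient. Concretely, the quotient fan of $\sigma_0$ under $q=q_2\circ q_1$ equals the quotient fan under $q_2$ of the complete fan $q_{1*}(\Sigma(\sigma_0))$. This is exactly what Proposition \ref{prop:niceisom} uses implicitly. I would justify it directly from the definition: cones of $q_*$ are intersections of images of cones of $\sigma_0$. Images of cones of $q_{1*}\Sigma$ are intersections of images of faces, so the common refinements agree, since taking images commutes with intersections up to refinement. Proposition \ref{prop:ChowToric} then identifies the result with the normalized Chow quotient.

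The main obstacle is this refinement equality. Images do not commute with intersections in general. The coarsest common refinement of $\{q(\tau)\}$ is, however, the same as that of $\{q_2(\kappa)\}$ for $\kappa$ ranging over the cones of $q_{1*}\Sigma$, because each $q_1(\tau)$ is a union of such $\kappa$.

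For the final claim, set $m=k=n-k-1$. Then the two varieties $(\P^m)^{m+1}$ modulo $(\C^*)^m$ and $\P^m\times\P^m$ modulo $(\C^*)^m$ are identified, for $n=2m+1$ with $k=1$ after swapping, or more precisely for $k=1,n-k-1=m$. This gives the quotient of $(\P^1)^{m}$ by $\C^*$, equivalently, by the above, the quotient of $(\P^{m})^2$ by $(\C^*)^m$. Its quotient fan is the fan of $\Delta^1\otimes\Delta^m$, which we identify with the Weyl-chamber arrangement of type $\DA_m$: the quotient fan of $\Delta^m\otimes\Delta^1$ is the normal fan of the Minkowski sum of the simplices $\Delta^m$ and $-\Delta^m$ refined by all images, namely the braid arrangement. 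I would check this by noting that the rays are the $0/1$ vectors (images of faces), which are exactly the rays of the permutohedral fan.
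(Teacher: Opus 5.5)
Your route is essentially the paper's, but one step is justified incorrectly and the final paragraph has an index slip. The identity $\Delta^k\otimes\Delta^{n-k-1}\sim\Delta^{n-k-1}\otimes\Delta^k$ (up to permuting rows and columns) is the matrix form of the transposition isomorphism $\DA_n(k+1)\simeq\DA_n(n-k)$ that the paper uses. The paper then applies Proposition~\ref{prop:niceisom} to both Grassmannians. Equivalently, one can use the two subtori $H',H''\subset H$ whose weights on $u^j_\ell$ depend only on $\ell$, resp.\ only on $j$.

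\textbf{The flawed step.} As a general statement, $q_*\Sigma(\sigma_0)=q_{2*}(q_{1*}\Sigma(\sigma_0))$ is false, and your reason only gives one inclusion. Since each $q_1(\tau)$ is a union of cones $\kappa$, each $q(\tau)$ is a union of cones $q_2(\kappa)$; this shows only that the two-stage fan refines $q_*\Sigma(\sigma_0)$. A cone $\kappa$ that is an intersection of images, but not itself an image, can create new walls. For example:
\begin{itemize}
\item Let $q_1\colon\R^5\to\R^3$ send $e_1,\dots,e_5$ to $(1,0,1),(-1,0,1),(0,1,1),(0,-1,1),(0,0,-1)$. This is the transposed Gale dual of the fully definite weights $(1,0),(1,0),(0,1),(0,1),(2,2)$.
\item Let $q_2(x,y,z)=(3x+z,5y+z)$.
\item Then $q_1(\langle e_1,e_2\rangle)\cap q_1(\langle e_3,e_4\rangle)=\R_{\geq 0}(0,0,1)$ is a ray of $q_{1*}\Sigma(\sigma_0)$, so $(1,1)$ spans a ray of the two-stage fan.
\item The rays of the planar fan $q_*\Sigma(\sigma_0)$ are only the directions $(4,1),(-2,1),(1,6),(1,-4),(-1,-1)$ of the $q(e_i)$, and $(1,1)$ lies in the interior of $\langle(4,1),(1,6)\rangle$.
\end{itemize}
So computing Chow quotients in stages can produce a strict toric blow-up of the Chow quotient.

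\textbf{Why it still holds here.} What you need is the reverse property. The intermediate fan is that of $(\P^{n-k-1})^{k+1}$ (resp.\ $(\P^k)^{n-k}$), a product of fans of projective spaces. Each of its cones is itself $q_1(\tau)$ for some face $\tau$ of $\sigma_0$: a cone spanned by a proper subset of the rays of $\P^{n-k-1}$ is the image of the corresponding face of the orthant. Hence every $q_2(\kappa)$ is some $q(\tau)$, which gives the missing inclusion. This is exactly the point used implicitly in Proposition~\ref{prop:niceisom}, so you can simply cite that proposition with $k$ replaced by $n-k-1$.

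\textbf{The final claim.} The right specialization is $k=1$, $n=m+2$, which gives $(\P^1)^{m+1}$ modulo $\C^*$, not $(\P^1)^m$; the case $m=k=n-k-1$ plays no role. The identification with the permutohedral variety is Proposition~\ref{prop:permuto}. If you want to argue directly, matching rays is not enough. Instead, note that the columns of $\Delta^1\otimes\Delta^m$ are $\pm\epsilon_0,\dots,\pm\epsilon_m$ with $\sum\epsilon_i=0$, a centrally symmetric configuration. So every hyperplane spanned by columns, i.e.\ every $\{x_a=x_b\}$, bounds images of faces on both sides. Conversely, every facet of an image lies in such a hyperplane. Hence the quotient fan is the braid arrangement fan.
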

\begin{proof}
It follows from the fact that the transposition isomorphism $\DA_n(k+1)\simeq \DA_n(n-k)$ preserves the $H$-action and the Bruhat neighborhoods.
\end{proof}

\begin{remark}\label{rem:niceisom2}
Note that the normalized Chow quotient of $(\P^k)^{n-k}$ by the diagonal $(\C^*)^k$-action has $(\P^{n-k-1})^k$ as one of its toric contractions. In fact, if $P_0$ is one of the fixed points of the natural $(\C^*)^k$-action, the $(\C^*)^k$-action on $(\P^k)^{n-k}$ is fully definite at $(P_0,\dots,P_0)$. Then, by Proposition \ref{prop:maptocomb}, we have a contraction of the normalized Chow quotient of $(\P^k)^{n-k}$ to the combinatorial quotient of the Bruhat neighborhood $(P_0,\dots,P_0)$, and one may easily check that this is $(\P^{n-k-1})^k$. The quotient map is toric because the normalized Chow quotient of $(\P^k)^{n-k}$ is a combinatorial quotient. 
Putting Corollary \ref{cor:niceisom} into the picture, the normalized Chow quotient of $(\P^k)^{n-k}$ has also a toric birational contraction to $(\P^{k})^{n-k-1}$.
\end{remark}

Whereas in the case $k=1$ the variety $\CU_{I_0}$  can be easily computed, its complete description in the case $k>1$ is a challenging problem (see \cite[p.~ 249]{GKZ}, \cite[Section 5]{BFS90}). An analysis of the case $k=2$, based on a geometric description of its boundary divisors, will be the goal of a forthcoming paper.

\subsection{Mutations}\label{ssec:grass2}

Note first that, from the transposed Gale dual (formula \ref{eq:Galegrass}) of the weight matrix of the $H$-action, we may  write an explicit coordinate expression of the rational quotient map $U_{I_0}\dashrightarrow (\C^*)^{k(n-k-1)}\subset\CU_{I_0}$:
\begin{equation}\label{eq:quotientGrass}
\setlength{\arraycolsep}{1.5pt}
\left(
u_\ell^j
\right)_{\substack{j=0,\dots,k\\ \ell=k+1,\dots,n}}\mapsto
\left(
\dfrac{u_\ell^ju_n^k}{u_\ell^ku_n^j}
\right)_{\substack{j=0,\dots,k-1\\ \ell=k+1,\dots,n-1}}
\end{equation}

On the other hand, we may also write the action of the Weyl group $W$ on $U_{I_0}$. Denoting by $r_i$ the permutation $(i-1,i)$, $i=1,\dots,n$, we may distinguish three cases: $i\leq k$, $i=k+1$, and $i>k+1$. The outcome is the following:

\[
r_i(u_\ell^j)=
\begin{cases}
\bigl(u_\ell^{r_i(j)}\bigr) & i\le k,\\[4pt]
\dfrac{1}{u_{k+1}^k}\, \left(
\begin{array}{c|ccc}
- u_{k+1}^0 & \multicolumn{3}{c}{ } \\[4pt]
\vdots & \multicolumn{3}{c}{(u_{k+1}^k\,u_\ell^j - u_\ell^k\,u_{k+1}^j)} \\[4pt]
- u_{k+1}^{k-1} & \multicolumn{3}{c}{} \\[10pt]\hline\\[-6pt]
1 & u_{k+2}^k & \cdots & u_n^k
\end{array}
\right)
 & i = k+1,\\[48pt]
\bigl(u_{r_i(\ell)}^{j}\bigr) & i > k+1.
\end{cases}
\]

Merging this with the rational quotient map described above, we obtain the mutation maps of (the torus of) $\CU_{I_0}$ that allow us to construct the Chow quotient out of the combinatorial ones.
The coordinates in the torus of $\CU_{I_0}$ will be denoted:
\[
\left(y_\ell^j\right)_{\substack{j=0,\dots,k-1\\ \ell=k+1,\dots,n-1}}.\]

By Remark \ref{rem:niceisom2}, the torus in $\CU_{I_0}$ may be compactified into a particular geometric quotient of $U_{I_0}$ isomorphic to $(\PP^{n-k-1})^k$, and so we may think of the mutation maps as birational automorphisms of $(\PP^{n-k-1})^k$. In order to do so, we use homogeneous coordinates in each factor: 
\[([z^j_{\ell}])=([z^0_{\ell}],[z^1_{\ell}],\dots,[z^{k-1}_{\ell}]),\quad\mbox{with}\quad [z^j_{\ell}]=[z^j_{k+1}:\dots:z^j_{n}],\quad\mbox{and}\quad y^j_\ell=z^j_\ell/z^j_n.
\]

Then, denoting by $r_i$ the transposition of a set of indices exchanging $i-1$ and $i$, and denoting by $p_{n-k-1}$ the projective involution of $\P^{n-k-1}$ given by:
$$
p_{n-k-1}([z_{k+1}:z_{k+2}:\dots:z_n]):=[-z_{k+1}:z_{k+2}-z_{k+1}:\dots:z_n-z_{k+1}],
$$
we may state the following: 
\begin{proposition} Let $X=\DA_n(k+1)$, $1\leq k\leq n-2$, be the Grassmannian of $k$-dimensional linear spaces in $\P^n$, with the action of a maximal torus in $\SL(n+1)$. Then the mutation group $\mu(W)$, presented as birational automorphisms of the quotient $(\P^{n-k-1})^k$ of $U_{I_{0}}$, is generated by the involutions $r_1,\dots,r_n$ given by:
\par\medskip
\begin{center}
\begin{tabular}{|C|C|}
\hline\rule{0pt}{2.5ex}
i&r_i\left([z_\ell^j]\right)\\[4pt]
\hline\hline
1\leq i\leq k-1 & \left([z_\ell^{r_i(j)}]\right)\\[4 pt]
\hline\rule{0pt}{2.5ex}
i= k & \left([z^0_{\ell}/z^{k-1}_{\ell}],\dots,[z^{k-2}_{\ell}/z^{k-1}_{\ell}],[1/z^{k-1}_{\ell}]\right)\\[4 pt]
\hline\rule{0pt}{2.5ex}
i= k+1 & \left(p_{n-k-1}([z^j_{\ell}])\right)\\[4 pt]
\hline\rule{0pt}{2.5ex}
i>k+1& \left([z^j_{r_i(\ell)}]\right)\\[4 pt]
\hline
\end{tabular}
\end{center}
\qedhere
\end{proposition}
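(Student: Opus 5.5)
Since $W=S_{n+1}$ is generated by the simple transpositions $r_1,\dots,r_n$ and $\mu$ is a group homomorphism, $\mu(W)$ is generated by the $\mu(r_i)$. So the plan is to compute, for each $i$, the birational automorphism of the torus of $\CU_{I_0}$ obtained by composing the action of $r_i$ on $U_{I_0}$ (the formulas displayed above) with the rational quotient map (\ref{eq:quotientGrass}). We then rewrite the result in the homogeneous coordinates $[z_\ell^j]$ of $(\P^{n-k-1})^k$, via $y_\ell^j=z_\ell^j/z_n^j$. Each $\mu(r_i)$ is automatically an involution, because $r_i^2=1$.

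The cases split naturally.

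\emph{Case $i>k+1$.} Here $r_i$ permutes the lower indices $\ell$ of the $u_\ell^j$. The invariants $u_\ell^ju_n^k/(u_\ell^ku_n^j)$ are then permuted accordingly, including when $\ell$ or $n$ is moved. In homogeneous coordinates, where $z_n^j$ plays the role of the index $n$, this becomes the permutation $[z^j_{r_i(\ell)}]$.

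\emph{Case $1\le i\le k-1$.} Here $r_i$ swaps the upper indices $j=i-1,i$, both $<k$. This permutes the factors of $(\P^{n-k-1})^k$.

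\emph{Case $i=k$.} Here $r_k$ swaps the upper indices $k-1$ and $k$, so the reference index $k$ changes. The new $y_\ell^j$ equals $y_\ell^j/y_\ell^{k-1}$ for $j<k-1$, and $1/y_\ell^{k-1}$ for $j=k-1$. The quotients by $z_n$ cancel consistently, which gives the stated formula. This is a short check that the normalizations at $\ell=n$ match.

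\emph{Case $i=k+1$.} This is the main obstacle. We substitute the nontrivial formula for $r_{k+1}(u_\ell^j)$ into (\ref{eq:quotientGrass}). For $\ell\ge k+2$, $j<k$, the new coordinate is $u_{k+1}^ku_\ell^j-u_\ell^ku_{k+1}^j$, and the new coordinate $u_\ell^k$ becomes $u_\ell^k$. The new column $\ell=k+1$ is $-u_{k+1}^j$ over $1$. The common factor $1/u_{k+1}^k$ cancels in the degree-zero ratios. Writing everything in terms of $y$ gives $y_\ell^j\mapsto (y_\ell^j-y_{k+1}^j)/(y_n^j-y_{k+1}^j)$, i.e.\ the projective map $[z_{k+1}:\dots:z_n]\mapsto[-z_{k+1}:z_{k+2}-z_{k+1}:\dots:z_n-z_{k+1}]$ applied factorwise, with the sign on the first coordinate coming from $-u_{k+1}^j$. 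The care needed here is in tracking signs and normalizations, and in checking that the ratio for $\ell=k+1$ agrees with the first entry of $p_{n-k-1}$. I would verify this first on $n=5$, $k=2$.

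Finally, since $(\P^{n-k-1})^k$ is a compactification of the same torus (Remark \ref{rem:niceisom2}), these torus formulas determine the birational automorphisms, which completes the proof.
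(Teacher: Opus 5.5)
Your proposal is correct and is essentially the paper's own argument: $\mu(W)$ is generated by the images of the simple transpositions, and each $\mu(r_i)$ is obtained by composing the displayed action of $r_i$ on $U_{I_0}$ with the quotient map (\ref{eq:quotientGrass}) and then passing to the coordinates $[z^j_\ell]$, which amounts to taking $z^j_\ell=u^j_\ell/u^k_\ell$. One small correction in the case $i=k+1$: your formula $y^j_\ell\mapsto (y^j_\ell-y^j_{k+1})/(1-y^j_{k+1})$ holds only for $\ell\ge k+2$, while for $\ell=k+1$ the new coordinate is $-y^j_{k+1}/(1-y^j_{k+1})$, which is precisely the dehomogenized first entry of $p_{n-k-1}$.
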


Note that every $r_i$, $i\neq k$ is an isomorphism, while $r_k$ is not defined on the inverse image by the projection onto the last factor $\P^{n-k-1}$ of $n-k\choose 2$ codimension two linear subspaces.

\subsection{Grassmannians of lines}\label{sec:grasslines}

We consider here the particular case of the Grassmannians of lines $\DA_n(2)=\G(1,n)$, whose Chow quotients (by the action of a maximal torus $H\subset \SL(n+1)$) have been described by Kapranov. We will show here how to retrieve them by means of the tools introduced before. We will freely use here the notation previously introduced.

First of all, we may compute the combinatorial quotient of the $H$-invariant affine open subset $U_{I_0}\subset\DA_n(2)$, $I_0=\{0,1\}$. 

\begin{proposition}\label{prop:permuto}
The combinatorial quotient by the $H$-action of the affine variety $U_{I_0}$ is the projective toric variety defined by the normal fan of the $(n-2)$-dimensional permutohedron.
\end{proposition}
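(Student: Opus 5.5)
For $k=1$ the transposed Gale dual (\ref{eq:Galegrass}) of the weight matrix is $\Delta^1\otimes\Delta^{n-2}$. It is the $(n-2)\times 2(n-1)$ matrix $(\Delta^{n-2}\,|\,-\Delta^{n-2})$. So the quotient fan $q_*(\Sigma(\sigma_0))$ lives in $\R^{n-2}$, and its rays are among the images of the $2(n-1)$ coordinate rays: $\pm e_1,\dots,\pm e_{n-2}$ and $\pm(e_1+\dots+e_{n-2})$. The plan is to show that the quotient fan coincides with the normal fan of the $(n-2)$-dimensional permutohedron, realized in $\R^{n-1}/\R(1,\dots,1)\simeq\R^{n-2}$.

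The cleaner route is through Proposition \ref{prop:niceisom} and Corollary \ref{cor:niceisom}, rather than directly intersecting projected cones. With $k=1$, Proposition \ref{prop:niceisom} identifies $\CU_{I_0}$ with the normalized Chow quotient of $(\P^1)^{n-1}$ by the diagonal $\C^*$-action. Corollary \ref{cor:niceisom} identifies this in turn with the normalized Chow quotient of $\P^{n-2}\times\P^{n-2}$ by the diagonal $(\C^*)^{n-2}$-action. The second part of the same corollary states that this is the permutohedral variety of dimension $n-2$. So the statement follows once we check the dimension bookkeeping ($m=n-2$), and that "permutohedral variety" there means the toric variety of the normal fan of the permutohedron.

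I would also give an independent direct check, since the corollary's permutohedral claim is stated rather briefly. Use Proposition \ref{prop:ChowToric} and the Kapranov--Sturmfels--Zelevinsky description: the Chow quotient of a projective toric variety is the toric variety of the fiber polytope. For $(\P^1)^{n-1}$ with the diagonal $\C^*$-action, the moment polytope is the cube $[0,1]^{n-1}$, and the projection is $x\mapsto\sum x_i$. The fiber polytope of the cube under the sum map is the Minkowski sum (integral) of the hypersimplex slices $\{x\in[0,1]^{n-1}:\sum x_i=t\}$, $t\in[0,n-1]$. Its normal fan is the common refinement of the normal fans of the hypersimplices $\Delta(j,n-1)$, $j=1,\dots,n-2$. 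That refinement is cut out by the hyperplanes $x_a=x_b$ in $\R^{n-1}/\R\mathbf 1$, i.e. the braid arrangement, whose complement's chambers give the normal fan of the permutohedron. Concretely, the maximal cones are indexed by orderings $x_{\sigma(1)}\ge\dots\ge x_{\sigma(n-1)}$, with rays the images of the $2^{n-1}-2$ indicator vectors of proper nonempty subsets.

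The main obstacle is this refinement step. The normal fan of each $\Delta(j,n-1)$ is coarser than the braid fan, so their common refinement lies between them, and one must show that it is exactly the braid fan. I would do this by showing that every wall $x_a=x_b$ actually appears as a wall in some hypersimplex normal fan. Directly in quotient-fan language, this is the same as checking that each ordering cone arises as an intersection of projections $q(\tau)$ of faces of the orthant. Both checks are routine but require care with signs coming from the $-\Delta^{n-2}$ block.
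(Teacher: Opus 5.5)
\textbf{Verdict.} Your argument is correct, modulo two points below. Its first step is the paper's: the paper also applies Proposition \ref{prop:niceisom} with $k=1$ to reduce to the normalized Chow quotient of $(\P^1)^{n-1}$ by the diagonal $\C^*$-action.

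\textbf{Comparison with the paper.} The two arguments differ only in the last step. The paper cites \cite[Example~7.3]{WORS6} for the fact that this quotient is the permutohedral variety, and mentions the GKZ example and the Cremona-resolution picture as alternatives. You instead prove that fact through the Kapranov--Sturmfels--Zelevinsky fiber polytope, and that computation is sound. The slices $P_t=\{x\in[0,1]^{n-1}:\sum_i x_i=t\}$ satisfy $P_t=(j-t)\Delta(j-1,n-1)+(t-j+1)\Delta(j,n-1)$ for $t\in[j-1,j]$; to see this, compare support functions. Hence the fiber polytope is, up to translation and dilation, $\sum_{j=1}^{n-2}\Delta(j,n-1)$. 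Your route gives a self-contained proof and shows where the braid arrangement comes from; the paper's is a one-line reduction to the literature.

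The ``main obstacle'' you anticipate in fact disappears. Suppose $w_{\sigma(1)}>\dots>w_{\sigma(n-1)}$. The $w$-maximal vertex of $\Delta(j,n-1)$ is $e_{\sigma(1)}+\dots+e_{\sigma(j)}$. So the $w$-maximal vertex of the sum has coordinate $n-1-i$ in position $\sigma(i)$. Thus $\sum_j\Delta(j,n-1)$ is literally the standard permutohedron, and no analysis of the common refinement is needed.

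\textbf{Two things to fix.}

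First, drop the detour through Corollary \ref{cor:niceisom}. Its proof only establishes the isomorphism between the two Chow quotients. Its permutohedral clause rests on knowing the $\C^*$-quotient of $(\P^1)^{m+1}$, which is exactly what you are proving here. So routing the argument through it is circular, and your fiber polytope computation is what actually carries the proof.

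Second, your opening claim about the rays of $q_*(\Sigma(\sigma_0))$ is false for $n\ge 5$. You say they are among the images $\pm e_i$ and $\pm(e_1+\dots+e_{n-2})$ of the coordinate rays. Identify $\R^{n-2}$ with $\R^{n-1}/\R\mathbf 1$ so that the columns of $\Delta^{n-2}$ become $e_1,\dots,e_{n-1}$. Then these images are the indicator vectors of singletons and of their complements, only $2(n-1)$ rays. The permutohedral fan, however, has $2^{n-1}-2$ rays. The missing ones are the indicator vectors of subsets of size $2,\dots,n-3$, and they appear only after intersecting projections of higher-dimensional faces of $\sigma_0$. This claim is not used later, but it contradicts your own final description of the fan.
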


That projective toric variety is usually called the $(n-2)$-dimensional {\em permutohedral variety}, and can be described as the variety constructed upon $\PP^{n-2}$ by blowing up the coordinate points and then the linear subspaces generated by them in order of increasing dimension (see \cite[Section 2]{Lin24}). 

\begin{proof}
This statement is classically known, and can be proved in several ways. First of all, it could be seen as a dual statement of \cite[Example 7.3.C]{GKZ}. Moreover, by looking at the left and right hand parts of the above matrix, one could argue that the variety defined by the quotient fan is the minimal toric resolution of the standard Cremona transformation of $\P^{n-2}$, which is the permutohedral variety. 

From the discussion presented above, we may obtain it as follows. By Proposition \ref{prop:niceisom}, the combinatorial quotient of the $H$-invariant affine open set equals the Chow quotient of $(\P^1)^{n-1}$ by the diagonal action of $\C^*$, which is the $(n-2)$-dimensional permutohedral variety (see \cite[Example~7.3]{WORS6}). 
\end{proof}

Let us now discuss how to compute the Chow quotient of the Grassmannian out of the combinatorial quotient of $U_{01}$, by putting the action of the Weyl group into the picture. Following the previous section, the mutations $r_1,\dots,r_n$, presented as birational automorphisms of $\P^{n-2}$, in homogeneous coordinates $[z_2:\dots:z_n]$, read in our case as follows:

\par\medskip
\begin{center}
\begin{tabular}{|C|C|}
\hline\rule{0pt}{2.5ex}i&r_i\left([z_2:\dots:z_n]\right)\\[4pt]
\hline\hline\rule{0pt}{2.5ex}
1 & [z_2^{-1}:\dots:z_n^{-1}]\\[4pt]
\hline\rule{0pt}{2.5ex}
2 & [-z_2:\,-z_2+z_3:\,\cdots:\,-z_2+z_n]\\[4 pt]
\hline\rule{0pt}{2.5ex}
 i \ge 3&  [z_2:\cdots:z_{i}:z_{i-1}:\cdots:z_n] \\[4 pt]
\hline
\end{tabular}
\end{center}\par\medskip

Let us now consider in the geometric quotient $\PP^{n-2}$ the points whose homogeneous coordinates $[z_2:\dots:z_n]$ are the following:
\[p_1:=[1:1:\dots:1],\,\,p_2:=[1:0:\dots:0]
,\,\,\dots,\,\,p_n:=[0:0:\dots:1].\] 

\begin{proposition}\label{prop:chowquotG(1,n)}
The normalized Chow quotient of the Grassmannian of lines $\DA_n(2)$ by the action of the maximal torus $H$ is isomorphic to the variety obtained from $\PP^{n-2}$ by blowing up $\{p_1,p_2, \dots, p_n\}$ and then the linear subspaces generated by them in order of increasing dimension.
\end{proposition}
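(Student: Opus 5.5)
We will use Theorem \ref{thm:main} and Proposition \ref{prop:maptocomb2}: $\CX$ is the normalization of the closure $V$ of the image of the torus $T'$ under the map to $\prod_{I}\CU_I$. By Proposition \ref{prop:permuto}, each factor $\CU_I$ is a permutohedral variety. Via the mutation maps, each factor is identified with the permutohedral variety over $\P^{n-2}$ obtained by blowing up the coordinate points $p_2,\dots,p_n$ and the linear spans of the $p_i$ in order of increasing dimension. The only change is that the coordinates are twisted by the element $\mu(w)$, where $w\in W$ moves $I_0$ to $I$. So the plan is to show that the variety $Y$ described in the statement is precisely the minimal common resolution of all these twisted permutohedral varieties. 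Concretely, we prove that the graph closure of $T'\to\prod_I\CU_I$ is $Y$, and that it is normal.

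\emph{Step 1: identify the twisted blow-up centres.} From the table, $r_i$ with $i\ge 3$ permutes $z_2,\dots,z_n$, hence permutes $p_2,\dots,p_n$. The map $r_2$ is a linear involution, and it swaps $p_1=[1:\dots:1]$ with $p_2=[1:0:\dots:0]$. Indeed $r_2(p_2)=[-1:-1:\dots:-1]=p_1$, and it fixes $p_j$ for $j\ge3$. The map $r_1$ is the standard Cremona transformation, which the permutohedral variety resolves; in other words, $r_1$ lifts to an automorphism of $\CU_{I_0}$. We then describe $\CU_I$, pulled back to $\P^{n-2}$, as follows: it is the blow-up of $n-1$ of the $n$ points $p_1,\dots,p_n$ in general linear position, together with their spans. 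Which point is omitted depends on $I$. Since $\mu(W)\simeq S_{n+1}$ acts through $S_n$ on $\{p_1,\dots,p_n\}$ in this way, every choice of $n-1$ points occurs.

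\emph{Step 2: the common resolution.} The $n$ points $p_1,\dots,p_n$ are in general position in $\P^{n-2}$, so $Y$ is a smooth variety (Kapranov's $\overline{M}_{0,n+1}$ model). For each omitted index $i$, the blow-up of $Y$ along the spans involving $p_i$ can be contracted. This gives a birational morphism $Y\to \CU_I$. Here we use that blowing up the full set of spans, in order of increasing dimension, dominates the blow-up of any subset of the points with their spans. The reason is that the iterated blow-ups commute up to the natural order, and the strict transforms of the spans through $p_i$ become divisors. Hence we get a morphism $Y\to\prod_I\CU_I$ extending the identity on $T'$. Its image is $V$. This morphism is birational onto its image.

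\emph{Step 3: finiteness and conclusion.} $Y$ is smooth, hence normal, so it suffices to show $Y\to V$ is finite, hence the normalization. Equivalently, no curve is contracted by all the maps $Y\to\CU_I$. A curve contracted by $Y\to\CU_I$ lies in the exceptional locus of the spans containing $p_i$. If the curve is contracted for every $i$, then these loci must meet in a way that is incompatible. In particular, the fibres of the exceptional divisor $E_S$ over a span $S$ are not contracted by the map forgetting a point $p_i\notin S$. This is the main obstacle: checking, via the description of $E_S$ as a product of smaller blown-up projective spaces, that the morphisms to the various $\CU_I$ separate curves. We will argue this by induction on $n$, using that each $E_S$ is a product of two varieties of the same type. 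With finiteness established, $Y\simeq \ol V\simeq\CX$ by Proposition \ref{prop:maptocomb2}.
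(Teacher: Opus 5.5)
\textbf{Step 1 is false as stated, and Step 2 inherits the error.} Your skeleton is the paper's: realise every $\CU_I$ as a contraction of $Y$ through the Weyl group, map $Y$ to $V$, and conclude by Proposition \ref{prop:maptocomb2} once no curve is contracted by all the maps. The trouble is the count. There are $\binom{n+1}{2}$ Bruhat neighbourhoods $U_I$, but only $n$ ways of omitting one of $p_1,\dots,p_n$. Nor can $\mu(W)$ act on $\{p_1,\dots,p_n\}$ through $S_n$: for $n\geq 4$ the group $S_{n+1}$ has no quotient $S_n$, and $r_1$ is not even defined at $p_2,\dots,p_n$. The linear involutions $r_2,\dots,r_n$ only move $I_0$ to the sets $\{0,j\}$, and $\CU_{\{0,j\}}$ is indeed the blow-up of the $p_i$, $i\neq j$, and their spans.

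The remaining $\binom{n}{2}$ quotients are of a different kind. For instance $\CU_{\{1,2\}}=r_1(\CU_{\{0,2\}})$ is the image under the Cremona map $r_1$ of the blow-up of $p_1,p_3,\dots,p_n$. The fact that $r_1$ is an automorphism of $\CU_{I_0}$ says nothing about this other blow-up. Already for $n=4$, $Y$ is the quintic del Pezzo surface and the ten $\CU_I$ are its ten contractions of a single $(-1)$-curve. Four of them contract some $E_j$; the other six contract the strict transform of a line $\overline{p_jp_k}$, so $Y\to\P^2$ does not even factor through them. Hence Step 2 produces morphisms from $Y$ to only $n$ of the factors, and the claim ``its image is $V$'' is unsupported. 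What you can actually deduce is a birational morphism $\CX\to Y$, not an isomorphism.

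\textbf{The missing ingredient} is that $r_1$ lifts to an automorphism of $Y$. Then all of $\mu(W)$ acts on $Y$ by automorphisms, and $Y\to\CU_{w(I_0)}$ is obtained by transporting $Y\to\CU_{I_0}$ with $\mu(w)$. You can prove it within your framework:
\begin{itemize}
\item On $T'$, with identity element $p_1$, $r_1$ is the inversion.
\item The span of $p_1$ and $\{p_j\}_{j\in S}$ is the closure of the subtorus $\{z_a=z_b \text{ for } a,b\notin S\}$, hence it is $r_1$-stable.
\item $r_1$ is an automorphism of $\CU_{I_0}$, and by your Step 2, $Y$ is obtained from $\CU_{I_0}$ by blowing up the strict transforms of these spans in increasing dimension. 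So $r_1$ lifts at each stage.
\end{itemize}
Alternatively, identify $Y$ with $\overline{M}_{0,n+1}$ and use its $S_{n+1}$-symmetry, where transpositions involving the special point act by standard Cremona maps.

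The paper also deduces contractions $X'\to\CU_{w(I_0)}$ for every $w\in W$ after discussing only $r_2,\dots,r_n$, so it tacitly relies on the same fact. It does not, however, assert your false identification. Once this is repaired, your Step 3 is fine and in fact more than needed: no curve is contracted even by the $n$ maps $Y\to\CU_{\{0,j\}}$ alone.
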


\begin{proof}
Let $X'$ be the variety obtained from $\PP^{n-2}$ blowing up $\{p_1,p_2, \dots, p_n\}$ and then the linear subspaces generated by them in order of increasing dimension. 

The Bruhat neighborhoods of the Grassmannian $X=\DA_n(2)$ that we are considering are of the form $U_{w(I_0)}$, $w\in W$, and $\CU_{I_0}$ is the permutohedral variety obtained upon $\P^{n-2}$ by blowing up $p_2,\dots,p_n$ and then the linear subspaces generated by them in order of increasing dimension. Hence it follows that the combinatorial quotient $\CU_{w(I_0)}$ is the permutohedral variety obtained upon $\P^{n-2}$ by blowing up $w(p_2),\dots,w(p_n)$ and then the linear subspaces generated by them in order of increasing dimension. 

Since the subgroup of $W$ generated by $r_2,\dots,r_n$ acts on the set of points $\{p_1,p_2,\dots,p_n\}$ as its group of permutations ($r_i$ exchanges $p_{i}$ and $p_{i-1}$), it follows that there are contractions $X'\to \CU_{w(I_0)}$ for every $w$ in $W$, that commute with the birational transformation among the $\CU_{w(I_0)}$'s. By Proposition \ref{prop:maptocomb2}, it then follows that we have a morphism $X'\to \CX$. One may easily check that there are no curves contracted simultaneously by every contraction  $X'\to \CU_{w(I_0)}$, hence it follows that $X'$ is isomorphic to $\CX$.
\end{proof}

\bibliographystyle{plain}
\bibliography{bibliomin}

\end{document}